\documentclass{article}
\usepackage{a4wide}
\usepackage{graphicx}
\usepackage{color}
\usepackage{amscd}
\usepackage{framed}
\usepackage{bbm}
\usepackage{amsmath,amsthm,amssymb}
\usepackage{fancyhdr,a4wide}
\usepackage{comment}
\newcommand{\mc}{\mathcal}
\newcommand{\mbb}{\mathbb}
\newcommand{\mr}{\mathrm}
\newcommand{\argmin}{\mathop{\rm argmin}\limits}

\newcommand{\veca}{\mathbf{a}}

\newcommand{\vecv}{\mathbf{v}}

\newcommand{\vecx}{\mathbf{x}}
\newcommand{\vecX}{\mathbf{X}}

\newcommand{\vecbeta}{\boldsymbol \beta}

\newcommand{\vecdelta}{\boldsymbol \delta}

\usepackage{bm}

\usepackage{algorithm}
\usepackage{algorithmic}

\numberwithin{equation}{section}
\newtheorem{theorem}{Theorem}[section]
\newtheorem{proposition}{Proposition}[section]

\newtheorem{lemma}{Lemma}[section]

\newtheorem{definition}{Definition}[section]
\newtheorem{assumption}{Assumption}[section]

\begin{document}
\title{Estimation of sparse linear regression coefficients under $L$-subexponential covariates}
\author{Takeyuki Sasai
\thanks{Department of Statistical Science, The Graduate University for Advanced Studies, SOKENDAI, Tokyo, Japan. Email: sasai@ism.ac.jp}
}
\maketitle
\begin{abstract}
	We tackle estimating sparse coefficients in a linear regression when
	the covariates are sampled from an $L$-subexponential random vector. This
	vector belongs to a class of distributions that exhibit heavier
	tails than Gaussian random vector. Previous studies have established
	error bounds similar to those derived for Gaussian random vectors. 
	However, these  methods require stronger conditions than those used for
	Gaussian random vectors to derive the error bounds. In this 
	study, we present an error bound identical to the one
	obtained for Gaussian random vectors up to constant factors without
	imposing stronger conditions,  when the covariates are drawn from an
	$L$-subexponential random vector. Interestingly, 
	we employ an $\ell_1$-penalized Huber regression, 
	which is known for its robustness against heavy-tailed random noises
	rather than covariates. We believe that this study uncovers a
	new aspect of the  $\ell_1$-penalized Huber regression method.
\end{abstract}

\section{Introduction}
\label{intro}
In this study, we consider sparse  linear regression. We define a sparse linear regression model as follows:
\begin{align}
	\label{model:normal}
	y_i = \vecx_i^\top\vecbeta^*+\xi_i,\quad  i=1,\cdots,n,
\end{align}
where $\vecbeta^* \in \mbb{R}^d$ is the true coefficient vector, $\left\{\vecx_i\right\}_{i=1}^n$ is a sequence of independent and identically distributed (i.i.d.) random vectors and $\left\{\xi_i\right\}_{i=1}^n$ is a sequence of i.i.d. random variables. Throughout this study, we assume that the number of  nonzero elements of $\vecbeta^*$ is $s(\leq d)$, and that $\{\vecx_i\}_{i=1}^n$ is independent of  $\{\xi_i\}_{i=1}^n$, and $d/s\geq 3$.

For a vector $\vecv \in \mbb{R}^d$, we define its $\ell_2$ norm as $\|\vecv\|_2$.
Assume that  $\{\vecx_i\}_{i=1}^n$ and $\{\xi_i\}_{i=1}^n$ are  sequences of i.i.d. random covariate vectors sampled from a multivariate Gaussian distribution with $\mbb{E}\vecx_i = 0$ and $\mbb{E}\vecx_i \vecx_i^\top = I$, and random noises sampled from a Gaussian distribution with $\mbb{E}\xi_i = 0$ and $\mbb{E}\xi_i^2 = 1$, respectively. 
We define $\lesssim$ as an inequality up to a numerical factor.
It is known that, we can construct an estimator $\hat{\vecbeta}$  such that
\begin{align}
\label{ine:bound}
	\mbb{P}\left(\|\hat{\vecbeta} -\vecbeta^*\|_2 \lesssim \sqrt{\frac{s\log (d/s)}{n}}+\sqrt{\frac{\log (1/\delta)}{n}}\right)\geq 1-\delta,
\end{align}
when $\sqrt{\frac{s\log (d/s)}{n}}+\sqrt{\frac{\log (1/\delta)}{n}}$ is sufficiently small \cite{BelLecTsy2018Slope}.
Since the invention of the lasso \cite{Tib1996Regression}, numerous studies have been conducted on the estimation of $\vecbeta^*$ from \eqref{model:normal}.
In many studies, the sequences of covariates $\{\vecx_i\}_{i=1}^n$ are sampled from a Gaussian or an $L$-subGaussian random vector (see Definition \ref{d:sgrvec}), and the noise $\{\xi_i\}_{i=1}^n$ is drawn from a Gaussian or  other heavy-tailed distributions, leading to results similar to  in \eqref{ine:bound}.
However, only a few studies considered the case when $\{\vecx_i\}_{i=1}^n$ is drawn from heavy-tailed random vectors other than Gaussian or $L$-subGaussian random vectors \cite{SivBenRav2015Beyond, LecMen2017Sparse, FanWanZhu2021Shrinkage, KucCha2022Moving,GenKip2022Generic},  resulting in  that are somewhat inferior results compared to those of \eqref{ine:bound}.
In this study, we make the assumption that $\{\vecx_i\}_{i=1}^n$ is drawn from an $L$-subexponential random vectors (see Definition \ref{d:sgrvec}), which have heavier tails than $L$-subGaussian random vector, and we derive a result that is comparable to \eqref{ine:bound}, except for the numerical constant and the dependency of $L$.

In the rest of this paper.
the value of the numerical constant $C$ be allowed to change from line to line.
\section{Related work, our estimation method and key idea}
\subsection{Some definitions}
\label{label:somedefinitions}
In Section \ref{label:somedefinitions}, we introduce the definitions which are used later.
\begin{definition}[$\psi_\alpha$-norm]
	\label{d:orlicz}
		For a random variable $f$, we let
		\begin{align}
			\|f\|_{\psi_\alpha}:=	\inf\left\{ \eta>0\,:\, \mbb{E}\exp\left|f/\eta\right|^\alpha\leq 2\right\} < \infty.
		\end{align}	
	\end{definition}

\begin{definition}[$L$-subGaussian and $L$-subexponential random vector]
	\label{d:sgrvec}
		Let $L$ be a numerical constant.
		A random vector $\vecx \in \mbb{R}^d$  is said to be an $L$-subGaussian and $L$-subexponential random vector if for any fixed $\vecv \in \mbb{R}^d$,
		\begin{align}
		\label{ine:sgrvec}
			\|\langle \vecx -\mbb{E}
			\vecx,\vecv\rangle\|_{\psi_2}\leq L\left(\mbb{E}\langle \vecx -\mbb{E}
			\vecx,\vecv\rangle^2\right)^\frac{1}{2}\text{ and }			\|\langle \vecx-\mbb{E}\vecx,\vecv\rangle\|_{\psi_1}\leq L\left(\mbb{E}\langle \vecx -\mbb{E}
			\vecx,\vecv\rangle^2\right)^\frac{1}{2},
		\end{align}
		respectively.
\end{definition}
\begin{definition}[$L$-subGaussian and $\sigma$-subexponential  random variable]
	\label{d:sgrvar}
		Let $\sigma$ be a numerical constant
		A random variable $\xi $ is said to be a $\sigma$-subGaussian and  $\sigma$-subexponential random variable if 
		\begin{align}
		\label{ine:sgrvar}
			\| \xi-\mbb{E}\xi \|_{\psi_2}\leq \sigma\text{ and }\|\xi-\mbb{E}\xi\|_{\psi_1}\leq \sigma,
		\end{align}
		respectively.
\end{definition}
We observe that $\|f-\mbb{E}f\|_{\psi_1}\leq \|f-\mbb{E}f\|_{\psi_2}$ holds for a $\sigma$-subGaussian random variable $f$ (as stated in  Lemma 2.7.7 of \cite{Ver2018High}), and we see that an $L$-subGaussian random vector is also an $L$-subexponential random vector.

\subsection{Related work}
\label{sec:rel}
For a vector $\vecv \in \mbb{R}^d$,  its $\ell_1$ norm defined as $\|\vecv\|_1$.
The estimation problem of $\vecbeta^*$ in \eqref{model:normal} from the data from $\{\vecx_i,y_i\}_{i=1}^n$ is considered in 
\cite{GenKip2022Generic} and \cite{SivBenRav2015Beyond},   where  $\{\vecx_i\}_{i=1}^n$ and $\{\xi_i\}_{i=1}^n$ are drawn from isotropic $L$-subexponential random vectors and $\sigma$-subexponential random variables, respectively.
In \cite{SivBenRav2015Beyond}, the estimation method used  is the $\ell_1$-penalized least squares:
\begin{align}
	\hat{\vecbeta} = \argmin_{\vecbeta \in \mbb{R}^d} \left\{ \frac{1}{n}\sum_{i=1}^n(y_i-\vecx_i^\top \vecbeta)^2 +\lambda_s \|\vecbeta\|_1\right\},
\end{align}
where $\lambda_s$ is a tuning parameter, and the estimation method in \cite{GenKip2022Generic} is  $\ell_1$-constrained least squares.
We define $\lesssim$ as the inequality up to   constant factor.
The methods in \cite{GenKip2022Generic} and \cite{SivBenRav2015Beyond} yield  results similar to those of \eqref{ine:bound}.
However, to obtain their errors bounds, they require that $n$ is at least larger than $s (\log (d/s))^2$, not  $s \log (d/s)$ to get their error bounds due to the heavy-tailedness of $\{\vecx_i\}_{i=1}^n$. 
In Section \ref{label:o}, we explain how the  heavy-tailedness of $\{\vecx_i\}_{i=1}^n$ negatively affects the results, and  propose a method to eliminate the additional condition.

\subsubsection{Other related work}
\label{sec:relo}
\cite{FanWanZhu2021Shrinkage} considered a case where the covariates are drawn from a heavy-tailed random vector $\vecx$ for which,  for any $\vecv \in \mbb{S}^{d-1}$, $\mbb{E}\langle \vecx,\vecv\rangle^4\leq K^4$, where $K$ is some constant holds. The method proposed by \cite{FanWanZhu2021Shrinkage} requires additional conditions to be satisfied, namely, that $(s^2 \log d )/ n$ and $\|\vecbeta^*\|_1 \sqrt{(\log d)/n}$ are sufficiently small.
\cite{LecMen2017Sparse} considered  the restricted eigenvalue condition for the design matrix when the covariates are drawn from a heavy-tailed random vector $\vecx$ for which,  for any $\vecv \in \mbb{S}^{d-1}$, and $p$ such that $4\leq p \lesssim  \log d$,  $\mbb{E}|\langle \vecx,\vecv\rangle |^p\leq L^p$,  holds.
Moving on to \cite{KucCha2022Moving}, they investigated  a scenario where  the covariates are drawn from a marginal $L$-subWeibull random vector. This analysis was performed under the condition that $n$ is greater  than $s(\log (d/s))^2$.
A marginal $L$-subWeibull random vector $\vecx = (x_1,\cdots, x_d) \in \mbb{R}^d$ is a random vector for which, for  $0<\alpha$ and $L$, each coordinate satisfies $\|x_i\|_{\psi_\alpha} \leq L$. From the definition of $\|\cdot\|_{\psi_\alpha}$, we see that  $\sigma$-subexponential random variable is a $\sigma$-subWeibull random variable with $0< \alpha\leq 1$  and the latter has heavier tail than the former. Hence, \cite{KucCha2022Moving}  dealt with  more generalized situations than those considered in the present study.
Validating the effectiveness of the observations made in Section \ref{label:o} regarding these situations will be a future task.
\subsection{Our estimation method and key idea}
\label{label:o}
Define the Huber loss and its derivative as 
\begin{align}
	H(t) = \begin{cases}
	|t| -1/2 & (|t| > 1) \\
	t^2/2 & (|t| \leq 1)
	\end{cases} \text{ and }
	h(t) =	\frac{d}{dt} H(t) = \begin{cases}
		\mr{sgn}(t)\quad &(|t| >1)\\
		t\quad &(|t| \leq 1)
		\end{cases},
\end{align}
respectively.
The estimator is as follows:
\begin{align}
	\label{ine:Huber}
	\hat{\vecbeta} = \argmin_{\vecbeta \in \mbb{R}^d} \left\{ \sum_{i=1}^n \lambda_o^2 H\left(\frac{y_i-\vecx_i^\top \vecbeta}{\lambda_o \sqrt{n}}\right) +\lambda_s \|\vecbeta\|_1\right\},
\end{align}
where $\lambda_o$ is the tuning parameter for  Huber loss.
The $\ell_1$-penalized Huber loss is known to be effective when $\{\xi_i\}_{i=1}^n$ is drawn from heavy-tailed distributions or is affected by outliers \cite{SheOwe2011Outlier,NguTra2012Robust,DalTho2019Outlier,SunZhoFan2020Adaptive}. 
In this study, we find that the $\ell_1$-penalized Huber loss remains effective  when $\{\vecx_i\}_{i=1}^n$ is drawn from an $L$-subexponential random vector, and we can derive an error bound that is equivalent to \eqref{ine:bound}, except for  constant factors.
We note that, an estimator which is essentially identical to \eqref{ine:Huber} was also studied in detail in \cite{SunZhoFan2020Adaptive}, and derived results similar to ours. The main interest of \cite{SunZhoFan2020Adaptive} is the robustness of  \eqref{ine:Huber} for the heavy-tailedness of $\{\xi\}_{i=1}^n$, and \cite{SunZhoFan2020Adaptive} assumes that $\{\vecx_i\}_{i=1}^n$ is $L$-subGaussian. Thus, this study and \cite{SunZhoFan2020Adaptive} have different main interests.

Let us elucidate why the squared loss is ineffective in handling the heavy-tailed characteristics of $\{\vecx_i\}_{i=1}^n$.
Intuitively, the Huber loss limits the range of movement for $\{\xi_i\}_{i=1}^n$, thereby creating space for greater activity in $\{\vecx_i\}_{i=1}^n$.

Here we explain why the squared loss is not effective to the heavy-tailedness of $\{\vecx_i\}_{i=1}^n$ and $\{\xi_i\}_{i=1}^n$. 
For a vector $\vecv \in \mbb{R}^d$, we define its $\ell_0$ norm as $\|\vecv\|_0$ representing the number of  non-zero elements in $\vecv$. Additionally,  then, for $l = 0, 1,2$, we define $d$-dimensional $\ell_l$-ball  with diameter $a$ as 
$a\mbb{B}^d_l = \{\vecv \in \mbb{R}^d \,\mid\, \|\vecv\|_l\leq a \}$. 
When  deriving error bounds similar to  \eqref{ine:bound} with squared loss,
it becomes crucial  to evaluate the following quantity:
\begin{align}
	\label{ine:plsrabove}
	\sup_{\vecv \in  r_1\mbb{B}^d_1 \cap r_2 \mbb{B}^d_2}\sum_{i=1}^n \frac{1}{n}\xi_i \vecx_i^\top \vecv.
\end{align}
When  $\vecx_i$ is  an $L$-subGaussian random vector and  $\xi_i$ is a $\sigma$-subGaussian distribution, the cross term $\xi_i \vecx_i$ becomes a  $CL\sigma$-subexponential random vector (Lemma 2.7.7. of \cite{Ver2018High}).
However,  if  $\vecx_i$ is  an $L$-subexponential random vector, the cross term $\xi_i \vecx_i$  generally ceases to be  a  $CL\sigma$-subexponential random vector even when   $\xi_i$ is a Gaussian.	Consequently, obtaining the concentration inequality in \eqref{ine:plsrabove}, similar to the case where $\vecx_i$ is an $L$-subGaussian vector, becomes challenging.

In the context of $\ell_1$-penalized Huber regression, there is no need  to evaluate \eqref{ine:plsrabove}.
Alternatively, it is sufficient to evaluate  the following:
\begin{align}
	\label{ine:phsrabove}
	\sup_{\vecv \in  r_1\mbb{B}^d_1 \cap r_2 \mbb{B}^d_2}\sum_{i=1}^n \frac{1}{n}h\left(\frac{\xi_i}{\lambda_o\sqrt{n}}\right) \vecx_i^\top \vecv.
\end{align}
When  $\vecx_i$ is  an $L$-subexponential random vector and  $\xi_i$ is a $\sigma$-subexponential random variable, the cross term $h\left(\frac{\xi_i}{\lambda_o\sqrt{n}}\right)\vecx_i$ remains an  $L$-subexponential random vector because $h(\cdot)$ is bounded. Based to this observation, we succeed in obtaining a result similar to that of \eqref{ine:bound} up to constant factors.

\section{Our main result and key propositions}
\label{sec:mainproof}
In Section \ref{sec:mainproof}, we give our main result (Theorem \ref{t:main}) and Propositions \ref{p:main:upper} and \ref{p:main:lower}, which are used in the proof of Theorem \ref{t:main}. The proofs of Theorem \ref{t:main} and Propositions \ref{p:main:upper} and \ref{p:main:lower} are given in the Appendix.
Before we state our main result,  we introduce the following assumption:
\begin{assumption}
	\label{a:intro}
		Assume that
		\begin{itemize}
			\item [(i)]$\left\{\vecx_i \right\}_{i=1}^n$ is a sequence of i.i.d. random vectors sampled from an $L$-subexponential random vector with $\mbb{E}\vecx_i = 0,\,\mbb{E}\vecx_i \vecx_i^\top = I$,  
			\item [(ii)] $\left\{\xi_i\right\}_{i=1}^n$ is a sequence of i.i.d. random variables with  $\mbb{E}\xi_i^2 \leq \sigma^2$.
		\end{itemize}
	\end{assumption}
We note that the condition (ii) in Assumption \ref{a:intro} is weaker than  $\sigma$-subexponential property of $\{\xi_i\}_{i=1}^n$. 
Define
\begin{align}
	r_{d,s} = \sqrt{\frac{s\log(d/s)}{n}},\quad r_\delta = \sqrt{\frac{\log(1/\delta)}{n}}.
\end{align}
Then, we state our main theorem:
\begin{theorem}
\label{t:main}
Suppose that Assumption \ref{a:intro} holds. Let $c_1$ and $c_2$ be numerical constants, which are defined in Propositions \ref{p:main:upper} and \ref{p:main:lower}.
For the tuning parameters $\lambda_o$ and $\lambda_s$, we assume that  
\begin{align}
	\lambda_o \sqrt{n}&  \geq 576\sigma L^2 ,\quad  \lambda_s \sqrt{s}= c_s \lambda_o \sqrt{n} L(r_{d,s}+r_\delta),
\end{align}
where $c_s$ denotes a numerical constant such that $c_s \geq 5c_1$. 
Assume that $r_1 =3\sqrt{s}r_2$ and $r_2= 5L\lambda_o \sqrt{n}(c_1+c_2+c_s)   (r_{d,s}+r_\delta)$, and $n$ is sufficiently large so that 
\begin{align}
	 1\geq \max\left\{r_{d,s}, r_\delta, r_2,320 L^4 (c_1+c_2+c_s)  (r_{d,s}+r_\delta) \right\}.
\end{align}
Then, with probability at least $1-2\delta$, the optimal solution of \eqref{ine:Huber} satisfies  
\begin{align}
	\label{informal2}
	\|\hat{\vecbeta} -\vecbeta^*\|_1 \leq r_1,\quad 
	\|\hat{\vecbeta} -\vecbeta^*\|_2 \leq  r_2.
\end{align}
\end{theorem}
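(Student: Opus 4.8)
\ The plan is the standard localized analysis of a convex penalized $M$-estimator, with the two probabilistic inputs packaged as Propositions~\ref{p:main:upper} and~\ref{p:main:lower}. Put $\vecu:=\hat{\vecbeta}-\vecbeta^*$ and $\mc{L}_0(\vecbeta):=\sum_{i=1}^n\lambda_o^2H\!\left(\frac{y_i-\vecx_i^\top\vecbeta}{\lambda_o\sqrt n}\right)$, so that $\nabla\mc{L}_0(\vecbeta^*)=-\frac{\lambda_o}{\sqrt n}\sum_{i=1}^nh\!\left(\frac{\xi_i}{\lambda_o\sqrt n}\right)\vecx_i$ (using $y_i-\vecx_i^\top\vecbeta^*=\xi_i$), and let $D(\vecbeta):=\mc{L}_0(\vecbeta)-\mc{L}_0(\vecbeta^*)-\langle\nabla\mc{L}_0(\vecbeta^*),\vecbeta-\vecbeta^*\rangle\ge0$ be the loss Bregman divergence. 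Two elementary facts drive the argument. First, whenever $\mc{L}_0(\vecbeta)+\lambda_s\|\vecbeta\|_1\le\mc{L}_0(\vecbeta^*)+\lambda_s\|\vecbeta^*\|_1$ --- true for $\hat{\vecbeta}$ by optimality in~\eqref{ine:Huber}, hence for every point of the segment $[\vecbeta^*,\hat{\vecbeta}]$ by convexity of the objective --- one has the basic inequality
\begin{align}
	D(\vecbeta)\ \le\ -\big\langle\nabla\mc{L}_0(\vecbeta^*),\,\vecbeta-\vecbeta^*\big\rangle+\lambda_s\big(\|\vecbeta^*\|_1-\|\vecbeta\|_1\big).
\end{align}
Second, writing $\vecv:=\vecbeta-\vecbeta^*$: as $H$ coincides with $t\mapsto t^2/2$ on $[-1,1]$, on the event $G(\vecv):=\{i:|\xi_i|\le\tfrac12\lambda_o\sqrt n,\ |\vecx_i^\top\vecv|\le\tfrac12\lambda_o\sqrt n\}$ the $i$-th summand of $D(\vecbeta)$ equals $\tfrac1{2n}(\vecx_i^\top\vecv)^2$, and all summands are $\ge0$; hence $D(\vecbeta)\ge\tfrac1{2n}\sum_{i\in G(\vecv)}(\vecx_i^\top\vecv)^2$, a \emph{truncated} quadratic form whose crucial feature --- because $h$ is bounded --- is the absence of any product of heavy-tailed factors. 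This last object is what Proposition~\ref{p:main:lower} lower-bounds.

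\emph{Cone inclusion.} Since $h$ is bounded, $h(\tfrac{\xi_i}{\lambda_o\sqrt n})\vecx_i$ is mean-zero with $L$-subexponential coordinates, so Proposition~\ref{p:main:upper} gives $\|\nabla\mc{L}_0(\vecbeta^*)\|_\infty\le\lambda_s/3$ (the hypothesis $\lambda_o\sqrt n\ge576\sigma L^2$ being what makes $\lambda_s$ large enough). For any $\vecbeta$ as above, dropping the nonnegative term $D(\vecbeta)$ in the basic inequality gives $\langle\nabla\mc{L}_0(\vecbeta^*),\vecv\rangle\le\lambda_s\|\vecv_S\|_1-\lambda_s\|\vecv_{S^c}\|_1$ with $S:=\mr{supp}(\vecbeta^*)$, $|S|=s$; bounding the left side below by $-\|\nabla\mc{L}_0(\vecbeta^*)\|_\infty\|\vecv\|_1$ forces $\|\vecv_{S^c}\|_1\le2\|\vecv_S\|_1$ (the relation $c_s\ge5c_1$ being calibrated so the penalty slack absorbs the noise). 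So every such $\vecv$, and in particular $\vecu$, lies in $\mc{C}:=\{\vecw:\|\vecw_{S^c}\|_1\le2\|\vecw_S\|_1\}$, on which $\|\vecw\|_1\le3\|\vecw_S\|_1\le3\sqrt s\,\|\vecw\|_2$ deterministically; thus $\mc{C}\cap r_2\mbb{B}^d_2\subseteq r_1\mbb{B}^d_1\cap r_2\mbb{B}^d_2$ for $r_1=3\sqrt s\,r_2$, and it remains to show $\|\vecu\|_2\le r_2$.

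\emph{Rescaling to the localized set.} Suppose $\|\vecu\|_2>r_2$; set $\eta:=r_2/\|\vecu\|_2\in(0,1)$ and $\vecbeta_\eta:=(1-\eta)\vecbeta^*+\eta\hat{\vecbeta}$, so $\vecbeta_\eta$ satisfies the hypothesis above, $\vecv_\eta:=\vecbeta_\eta-\vecbeta^*=\eta\vecu$ has $\|\vecv_\eta\|_2=r_2$, and (cone inclusion, scale invariance of $\mc{C}$) $\vecv_\eta\in\mc{C}\cap r_2\mbb{B}^d_2\subseteq r_1\mbb{B}^d_1\cap r_2\mbb{B}^d_2$. In the basic inequality at $\vecbeta_\eta$, its left side $D(\vecbeta_\eta)$ is $\ge\tfrac1{2n}\sum_{i\in G(\vecv_\eta)}(\vecx_i^\top\vecv_\eta)^2\gtrsim\|\vecv_\eta\|_2^2=r_2^2$ by Proposition~\ref{p:main:lower} (whose hypotheses --- $\lambda_o\sqrt n\ge576\sigma L^2$ and smallness of $r_2$ and of $320L^4(c_1+c_2+c_s)(r_{d,s}+r_\delta)$ --- ensure the two indicators delete only a controlled fraction of the isotropic form $\tfrac1n\sum_i(\vecx_i^\top\vecw)^2$ uniformly over $\mc{C}\cap r_2\mbb{B}^d_2$), while its right side satisfies
\[
-\langle\nabla\mc{L}_0(\vecbeta^*),\vecv_\eta\rangle+\lambda_s\big(\|\vecbeta^*\|_1-\|\vecbeta_\eta\|_1\big)\ \le\ \lambda_o\sqrt n\sup_{\vecw\in r_1\mbb{B}^d_1\cap r_2\mbb{B}^d_2}\frac1n\sum_{i=1}^nh\!\left(\tfrac{\xi_i}{\lambda_o\sqrt n}\right)\vecx_i^\top\vecw+\lambda_s\sqrt s\,r_2,
\]
which by Proposition~\ref{p:main:upper} and $\lambda_s\sqrt s=c_sL\lambda_o\sqrt n(r_{d,s}+r_\delta)$ is $\lesssim(c_1+c_s)L\lambda_o\sqrt n(r_{d,s}+r_\delta)r_2$. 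Combining, $r_2^2\lesssim(c_1+c_s)L\lambda_o\sqrt n(r_{d,s}+r_\delta)r_2$, i.e.\ $r_2\lesssim(c_1+c_s)L\lambda_o\sqrt n(r_{d,s}+r_\delta)$, which contradicts $r_2=5(c_1+c_2+c_s)L\lambda_o\sqrt n(r_{d,s}+r_\delta)$ --- the factor $5$, the presence of $c_2$ (the restricted-curvature constant), and $c_s\ge5c_1$ being exactly what make this a contradiction. Hence $\|\vecu\|_2\le r_2$, and then $\|\vecu\|_1\le3\sqrt s\,\|\vecu\|_2\le r_1$; all randomness enters only through the two $(1-\delta)$-events of Propositions~\ref{p:main:upper} and~\ref{p:main:lower}, so the conclusion holds with probability at least $1-2\delta$.

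\emph{Main obstacle.} The crux is Proposition~\ref{p:main:lower}, the restricted curvature of the Huber loss under \emph{only} $L$-subexponential covariates: since $h'$ vanishes off $[-1,1]$ there is no global curvature, and one must lower-bound the truncated quadratic process $\inf_{\vecw\in\mc{C},\,\|\vecw\|_2=r_2}\tfrac1n\sum_{i=1}^n(\vecx_i^\top\vecw)^2\,\mathbbm{1}\{|\vecx_i^\top\vecw|\le\tfrac12\lambda_o\sqrt n\}\,\mathbbm{1}\{|\xi_i|\le\tfrac12\lambda_o\sqrt n\}$ uniformly over the cone --- precisely where heavier-than-subGaussian tails would ordinarily bite. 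The saving grace, foreshadowed around~\eqref{ine:phsrabove}, is that the truncation keeps everything $L$-subexponential with no products of subexponentials, so that a small-ball lower bound (the untruncated isotropic form equals $\|\vecw\|_2^2$; the $\xi$-indicator costs only a constant factor since $\mbb{E}\xi_i^2\le\sigma^2$ is negligible against $(\lambda_o\sqrt n)^2$, and likewise the $\vecx$-indicator since $L\|\vecw\|_2$ is negligible against $\lambda_o\sqrt n$) combined with a uniform-deviation bound over $r_1\mbb{B}^d_1\cap r_2\mbb{B}^d_2$ suffices. Proposition~\ref{p:main:upper} --- the multiplier process~\eqref{ine:phsrabove} together with the $\ell_\infty$ score bound --- is then routine for the same reason, and it is the favorable geometry of $r_1\mbb{B}^d_1\cap r_2\mbb{B}^d_2$, whose complexity is of order $r_2\sqrt{s\log(d/s)/n}$ rather than $r_2\sqrt{s\log d/n}$, that produces the Gaussian-type rate $r_{d,s}$.
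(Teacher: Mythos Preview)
Your overall architecture (basic inequality, localize along the segment, lower-bound curvature by Proposition~\ref{p:main:lower}, upper-bound the score by Proposition~\ref{p:main:upper}) is the right one, but the \emph{cone inclusion} step has a genuine gap that the theorem's choice of $\lambda_s$ does not survive.

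You claim that ``Proposition~\ref{p:main:upper} gives $\|\nabla\mc{L}_0(\vecbeta^*)\|_\infty\le\lambda_s/3$'', and then use this $\ell_\infty$ bound to force $\vecu=\hat{\vecbeta}-\vecbeta^*$ into the cone $\mc{C}$ \emph{before} any localization. But Proposition~\ref{p:main:upper} is a \emph{localized} bound: it controls $\langle\nabla\mc{L}_0(\vecbeta^*),\vecv\rangle$ only for $\vecv\in r_1\mbb{B}^d_1\cap r_2\mbb{B}^d_2$, not $\|\nabla\mc{L}_0(\vecbeta^*)\|_\infty$. If you instead derive the $\ell_\infty$ bound directly (Bernstein plus a union bound over the $d$ coordinates), you get $\|\nabla\mc{L}_0(\vecbeta^*)\|_\infty\asymp \lambda_o\sqrt{n}\,L\sqrt{\log(d/\delta)/n}$, whereas the theorem fixes $\lambda_s=c_s\lambda_o\sqrt{n}\,L\bigl(\sqrt{\log(d/s)/n}+\sqrt{\log(1/\delta)/(sn)}\bigr)$. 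Take for instance $d=3s$, $s$ large and $\delta$ a constant: the former is of order $\sqrt{\log s}$ while the latter is of order $1$, so $\|\nabla\mc{L}_0(\vecbeta^*)\|_\infty\le\lambda_s/3$ simply fails. In short, establishing the cone for the \emph{un}-localized error $\vecu$ requires a larger $\lambda_s$ than the theorem allows, and the $\log(d/s)$ rate is lost.

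This is precisely why the paper does \emph{not} prove cone inclusion first. Instead it rescales $\vecdelta_\eta=\eta\vecu$ until $\vecdelta_\eta$ hits the boundary of $r_1\mbb{B}^d_1\cap r_2\mbb{B}^d_2$ and then splits into two cases according to \emph{which} boundary is hit. If $\|\vecdelta_\eta\|_2=r_2$ (and $\|\vecdelta_\eta\|_1\le r_1$), Propositions~\ref{p:main:upper} and~\ref{p:main:lower} apply directly and yield $\|\vecdelta_\eta\|_2<r_2$, a contradiction. If $\|\vecdelta_\eta\|_1=r_1$ (and $\|\vecdelta_\eta\|_2\le r_2$), then $\vecdelta_\eta$ \emph{is already} in the localized set, so Proposition~\ref{p:main:upper} applies and gives the cone-type inequality $\|\vecdelta_\eta\|_1\le\tfrac{5}{2}\sqrt{s}\,\|\vecdelta_\eta\|_2<3\sqrt{s}\,r_2=r_1$, again a contradiction. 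The transfer from $\vecdelta_\eta$ back to the optimality condition at $\hat{\vecbeta}$ uses a monotonicity-of-gradients trick ($\eta Q'(\eta)\le\eta Q'(1)$, from convexity) rather than the Bregman inequality you wrote; this is what lets the paper work with the gradient-difference quantity that Proposition~\ref{p:main:lower} actually bounds. A second, smaller mismatch in your sketch is that Proposition~\ref{p:main:lower} lower-bounds $\langle\nabla\mc{L}_0(\vecbeta^*+\vecv)-\nabla\mc{L}_0(\vecbeta^*),\vecv\rangle$, not the Bregman divergence $D(\vecbeta^*+\vecv)$; the former dominates the latter, so a lower bound on it does not directly give one on $D$. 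Your route can be repaired by invoking the \emph{proof} of Proposition~\ref{p:main:lower} (which passes through the truncated quadratic $\sum_i f_i(\vecv)$ that also lower-bounds $D$), but not by its statement as written.
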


Suppose that the assumptions in Theorem \ref{t:main} and   $\lambda_o\sqrt{n} = 576 \sigma L^2$ hold,  we have
\begin{align}
	\label{ine:eb}
	\|\hat{\vecbeta} -\vecbeta^*\|_2 \leq 2880L^3 \sigma (c_1+c_2+c_s) (r_{d,s}+r_\delta),
\end{align}
and this coincides with \eqref{ine:bound} up to the numerical factor and the dependency of $L$.
From the conditions in Theorem \ref{t:main}, we see that Theorem \ref{t:main} does not require additional condition such that $n$ is at least greater than $s (\log (d/s))^2$, different from  \cite{SivBenRav2015Beyond} and \cite{GenKip2022Generic}.
We note that, we do not optimize the numerical factors in Theorem \ref{t:main}, and much room for improvement would remain.

The following two propositions play  important roles in the proof of Theorem \ref{t:main}:
\begin{proposition}
	\label{p:main:upper}
	Suppose that the assumptions  in Theorem \ref{t:main} hold. Let $c_1$ be a (sufficiently large) numerical constant.
	Then, for any $\vecv\in  r_1\mbb{B}^d_1 \cap r_2 \mbb{B}^d_2$, with probability at least $1-\delta$, we have 
	\begin{align}
		\label{ine:upper}
		\left| \sum_{i=1}^n \frac{\lambda_o}{\sqrt{n}} h\left(\frac{\xi_i}{\lambda_o\sqrt{n}}\right) \vecx_i^\top\vecv  \right|\leq c_1 \lambda_o \sqrt{n}L\left(\sqrt{\frac{\log(d/s)}{n}}r_1+\sqrt{\frac{\log(1/\delta)}{n}} r_2\right).
	\end{align}
\end{proposition}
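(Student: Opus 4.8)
The plan is to bound the supremum of the empirical process $\vecv \mapsto \sum_{i=1}^n \frac{\lambda_o}{\sqrt{n}} h\!\left(\frac{\xi_i}{\lambda_o\sqrt{n}}\right)\vecx_i^\top\vecv$ over the intersection of $\ell_1$- and $\ell_2$-balls $r_1\mbb{B}^d_1 \cap r_2\mbb{B}^d_2$. The key structural observation, already highlighted in Section \ref{label:o}, is that the random vectors $\vecg_i := h\!\left(\frac{\xi_i}{\lambda_o\sqrt{n}}\right)\vecx_i$ are $L$-subexponential: since $|h(\cdot)| \le 1$ pointwise, for any fixed $\vecu\in\mbb{R}^d$ we have $\big\|\langle \vecg_i,\vecu\rangle\big\|_{\psi_1} \le \big\|\langle \vecx_i,\vecu\rangle\big\|_{\psi_1} \le L\|\vecu\|_2$, using isotropy and the $L$-subexponential hypothesis (Assumption \ref{a:intro}(i)). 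Moreover $h$ is an odd function and $\{\xi_i\}$ is independent of $\{\vecx_i\}$ with $\mbb{E}\vecx_i = 0$, so $\mbb{E}\vecg_i = \mbb{E}\big[h\!\left(\tfrac{\xi_i}{\lambda_o\sqrt n}\right)\big]\,\mbb{E}\vecx_i = 0$; thus the process is centered.

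First I would reduce the supremum over the convex body $r_1\mbb{B}^d_1 \cap r_2\mbb{B}^d_2$ to a supremum over a finite or low-complexity set. The standard device (used e.g. in \cite{BelLecTsy2018Slope}) is that this intersection is contained, up to a constant factor, in the convex hull of $\ell_2$-balls of radius $r_2$ supported on $s$-sparse coordinate subsets — more precisely, in $C(r_1\mbb{B}^d_1 \cap r_2\mbb{B}^d_2) \subseteq \mr{conv}\{\vecv : \|\vecv\|_0 \le s,\ \|\vecv\|_2 \le r_2\}$ whenever $r_1 \asymp \sqrt{s}\,r_2$, which is exactly the relation $r_1 = 3\sqrt{s}\,r_2$ imposed in Theorem \ref{t:main}. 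Then it suffices to control, for each subset $S$ of size $s$, the quantity $\sup_{\|\vecv\|_2 \le r_2,\ \mr{supp}(\vecv)\subseteq S}\big|\sum_i \frac{\lambda_o}{\sqrt n}\langle\vecg_i,\vecv\rangle\big| = \frac{\lambda_o}{\sqrt n}\,r_2\,\big\|\sum_i (\vecg_i)_S\big\|_2$, and take a union bound over the $\binom{d}{s}$ choices of $S$.

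The core estimate is then a deviation bound for $\big\|\sum_i (\vecg_i)_S\big\|_2$, the norm of a sum of $n$ i.i.d. centered $L$-subexponential random vectors in $\mbb{R}^s$. I would obtain this from a Bernstein-type inequality for the one-dimensional marginals combined with an $\varepsilon$-net over the sphere $\mbb{S}^{s-1}$ (an $1/4$-net has cardinality $\le 9^s$), giving a bound of the form $\big\|\sum_i(\vecg_i)_S\big\|_2 \lesssim L\big(\sqrt{ns} + \sqrt{n\log(1/\delta)} + s + \log(1/\delta)\big)$ with probability $1-\delta/\binom{d}{s}$; the net cardinality $9^s$ and the union-bound factor $\binom{d}{s} \le (ed/s)^s$ together contribute $\lesssim s\log(d/s)$ to the effective "dimension," and under the sample-size condition $n \gtrsim s\log(d/s)$ (which follows from $r_{d,s}\le 1$) the linear-in-$s$ and linear-in-$\log(1/\delta)$ terms are dominated by their square-root counterparts. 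Multiplying by $\frac{\lambda_o}{\sqrt n} r_2$ and recalling $r_1 = 3\sqrt{s}\,r_2$, this yields $\big|\sum_i \frac{\lambda_o}{\sqrt n}\langle\vecg_i,\vecv\rangle\big| \lesssim \lambda_o\sqrt n\, L\big(\sqrt{\tfrac{\log(d/s)}{n}}\,r_1 + \sqrt{\tfrac{\log(1/\delta)}{n}}\,r_2\big)$, which is the claimed bound \eqref{ine:upper} after absorbing constants into $c_1$.

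The main obstacle I anticipate is bookkeeping rather than conceptual: correctly tracking the interplay between the union-bound entropy $\log\binom{d}{s} \asymp s\log(d/s)$, the net entropy $\asymp s$, and the $\log(1/\delta)$ term so that the final bound splits cleanly into an $r_1$-piece scaled by $\sqrt{\log(d/s)/n}$ and an $r_2$-piece scaled by $\sqrt{\log(1/\delta)/n}$ — in particular, ensuring the sub-exponential "slow" (linear) tail terms are swept into the Gaussian regime by the hypothesis $n \gtrsim s\log(d/s)$, and verifying that the constant $c_1$ can be chosen uniformly (independent of $d,s,n,\delta$). A secondary technical point is justifying the convex-hull / sparse-decomposition reduction of $r_1\mbb{B}^d_1\cap r_2\mbb{B}^d_2$ with explicit constants, which is where the structural assumption $d/s \ge 3$ and $r_1 = 3\sqrt{s}\,r_2$ enter.
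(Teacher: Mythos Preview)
Your proposal is correct and reaches the same conclusion by a more elementary route than the paper. Both arguments begin with the same reduction: the convex-hull inclusion $r_1\mbb{B}^d_1 \cap r_2\mbb{B}^d_2 \subset 2\,\mr{conv}\bigl((r_1/r_2)^2\mbb{B}^d_0 \cap r_2\mbb{B}^d_2\bigr)$ (the paper cites Lemma~3.1 of \cite{PlaVer2013One}; note the sparsity level is $(r_1/r_2)^2 = 9s$, not $s$, but this only shifts constants). From there the two proofs diverge. The paper applies the generic-chaining tail bound of Dirksen (Corollary~5.2 of \cite{Dir2015Tail}) directly to the process indexed by $(r_1/r_2)^2\mbb{B}^d_0 \cap r_2\mbb{B}^d_2$, after verifying the Bernstein moment condition and the $\psi_1$-increment condition for $h(\xi_i/\lambda_o\sqrt{n})\langle\vecx_i,\vecv\rangle$; the resulting $\gamma_1$- and $\gamma_2$-functionals of the sparse ball are then bounded via Dudley's entropy integral (Lemma~\ref{l:gc}). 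Your approach instead fixes a support $S$, covers $r_2\mbb{S}^{|S|-1}$ by a finite net, applies a one-dimensional subexponential Bernstein inequality pointwise, and union-bounds over the at most $\binom{d}{9s}\cdot 9^{9s}$ net points; the combined log-cardinality $\asymp s\log(d/s)$ plays the role of the chaining complexity, and the hypotheses $r_{d,s},r_\delta\le 1$ push the linear (subexponential) tail term below the Gaussian one. Your route is self-contained and avoids the generic-chaining machinery entirely; the paper's route is tidier in that a single black-box application of Dirksen's theorem handles both the complexity and the deviation simultaneously, and would extend more readily to index sets without an explicit finite-union decomposition. (One small remark: the oddness of $h$ is not what centers $\vecg_i$---it is $\mbb{E}\vecx_i=0$ together with independence that does the work, and indeed $\xi_i$ is not assumed symmetric.)
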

As we mentioned in Section \ref{sec:rel}, in the corresponding parts to Proposition \ref{p:main:upper}, the results in \cite{SivBenRav2015Beyond} and \cite{GenKip2022Generic}  require $n$ is sufficiently large so that $n$ is at least greater than $s (\log (d/s))^2$. However, Proposition \ref{p:main:upper} does not require the quadratic dependence of $n$ on $\log(\cdot)$.
\begin{proposition}
	\label{p:main:lower}
	Suppose that the assumptions  in Theorem \ref{t:main} hold.  Let $c_2$ be a (sufficiently large) numerical constant.
	Then, for any $\vecv \in r_1\mbb{B}^d_1 \cap r_2 \mbb{B}^d_2 $, with probability at least $1-\delta$,  we have
	\begin{align}
		\label{ine:sc}
		\sum_{i=1}^n \frac{\lambda_o}{\sqrt{n}} \left(-h\left(\frac{\xi_i}{\lambda_o\sqrt{n}}-\vecx_i^\top \vecv\right)+h\left(\frac{\xi_i}{\lambda_o\sqrt{n}}\right)\right)\vecx_i^\top \vecv \geq 	\frac{\|\vecv\|_2^2}{2}-c_2L\lambda_o\sqrt{n}\left(\sqrt{\frac{\log(d/s)}{n}}r_1+\sqrt{\frac{\log(1/\delta)}{n}} r_2\right).
	\end{align}
\end{proposition}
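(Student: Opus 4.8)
The plan is to read \eqref{ine:sc} as a \emph{restricted strong convexity} estimate: replace the Huber term by a ``robustified'' quadratic form in $\vecv$ via a pointwise bound on the score $h$, observe that its expectation already supplies the curvature $\tfrac12\|\vecv\|_2^2$, and absorb its fluctuation into the stated error term. For \textbf{Step 1 (pointwise bound on the score)}, put $t_i=\xi_i/(\lambda_o\sqrt n)$ and $a_i=\vecx_i^\top\vecv$. Since $h'=\mathbf{1}\{|\cdot|\le1\}$ a.e., $h(u)-h(u-a)=\big|[u-a,u]\cap[-1,1]\big|$, and an elementary case analysis gives, for all $u,a\in\mbb{R}$,
\[
 \big(h(u)-h(u-a)\big)a\ \ge\ \tfrac12\min\{a^2,|a|\}\,\mathbf{1}\{|u|\le\tfrac12\}\ \ge\ 0 .
\]
Hence the left side of \eqref{ine:sc} is at least $\tfrac{\lambda_o}{\sqrt n}\sum_{i=1}^n g_i(\vecv)$, where $g_i(\vecv):=\tfrac12\min\{(\vecx_i^\top\vecv)^2,|\vecx_i^\top\vecv|\}\,\mathbf{1}\{|\xi_i|\le\lambda_o\sqrt n/2\}$. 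Crucially, because $h$ is \emph{bounded}, the curvature seen through the Huber loss is $\min\{a^2,|a|\}$, which grows only \emph{linearly} in $a$; in particular $0\le g_i(\vecv)\le\tfrac12|\vecx_i^\top\vecv|$, so by Definition \ref{d:sgrvec} and $\mbb{E}\vecx_i\vecx_i^\top=I$ the summand $g_i(\vecv)$ has a subexponential envelope, $\|g_i(\vecv)\|_{\psi_1}\le\tfrac12 L\|\vecv\|_2$.

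\textbf{Step 2 (curvature from the expectation).} By independence of $\vecx_i$ and $\xi_i$, $\mbb{E}g_i(\vecv)=\tfrac12\,\mbb{P}(|\xi_i|\le\lambda_o\sqrt n/2)\,\mbb{E}\min\{(\vecx_i^\top\vecv)^2,|\vecx_i^\top\vecv|\}$. Markov's inequality with $\mbb{E}\xi_i^2\le\sigma^2$ and $\lambda_o\sqrt n\ge576\sigma L^2$ gives $\mbb{P}(|\xi_i|\le\lambda_o\sqrt n/2)\ge1-4/(576^2L^4)\ge\tfrac12$, and a short computation with the subexponential tail $\mbb{P}(|\vecx_i^\top\vecv|>\tau)\le2\exp(-\tau/(L\|\vecv\|_2))$ shows that the truncation in $\min\{\cdot,\cdot\}$ loses only a bounded fraction of $\mbb{E}(\vecx_i^\top\vecv)^2=\|\vecv\|_2^2$, so $\mbb{E}\min\{(\vecx_i^\top\vecv)^2,|\vecx_i^\top\vecv|\}\ge c(L)\|\vecv\|_2^2$ for a constant $c(L)>0$ depending only on $L$. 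Summing and using $\lambda_o\sqrt n\ge576\sigma L^2$ once more, $\tfrac{\lambda_o}{\sqrt n}\sum_{i=1}^n\mbb{E}g_i(\vecv)\ge\tfrac{\lambda_o\sqrt n}{4}\,c(L)\|\vecv\|_2^2\ge\tfrac12\|\vecv\|_2^2$.

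\textbf{Step 3 (fluctuation).} It remains to bound $\tfrac{\lambda_o}{\sqrt n}\big|\sum_{i=1}^n(g_i(\vecv)-\mbb{E}g_i(\vecv))\big|$. For a fixed $\vecv$ this is immediate from Bernstein's inequality for i.i.d.\ subexponential variables: using $\mathrm{Var}(g_i(\vecv))\le\mbb{E}g_i(\vecv)^2\le\tfrac14\|\vecv\|_2^2$ and $\|g_i(\vecv)-\mbb{E}g_i(\vecv)\|_{\psi_1}\lesssim L\|\vecv\|_2$, with probability at least $1-\delta$ one has $\big|\sum_i(g_i(\vecv)-\mbb{E}g_i(\vecv))\big|\lesssim\sqrt{n\log(1/\delta)}\,\|\vecv\|_2+L\log(1/\delta)\,\|\vecv\|_2$, which after multiplying by $\lambda_o/\sqrt n$ and using $\|\vecv\|_2\le r_2$ and $r_\delta\le1$ is at most $c_2 L\lambda_o\sqrt n\big(\sqrt{\log(d/s)/n}\,r_1+\sqrt{\log(1/\delta)/n}\,r_2\big)$ for $c_2$ large enough. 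To obtain the bound uniformly over $r_1\mbb{B}^d_1\cap r_2\mbb{B}^d_2$, symmetrize and apply Talagrand's contraction principle — $x\mapsto\tfrac12\min\{x^2,|x|\}$ is $1$-Lipschitz with value $0$ at $0$, and $\mathbf{1}\{|\xi_i|\le\lambda_o\sqrt n/2\}\in\{0,1\}$ — to reduce to the \emph{linear} process $\vecv\mapsto\langle\sum_i\varepsilon_i\vecx_i,\vecv\rangle$; splitting the coordinates of $\sum_i\varepsilon_i\vecx_i$ at the threshold $\asymp\sqrt{n\log(ed/s)}$ (pairing the few large ones against $\|\vecv\|_2\le r_2$, the rest against $\|\vecv\|_1\le r_1=3\sqrt s\,r_2$), together with a subexponential-envelope concentration inequality, gives the same bound. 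Combining Steps 1--3 yields \eqref{ine:sc}.

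\textbf{Main obstacle.} The crux is the interplay in Steps 1 and 3 between the boundedness of $h$ and the heavy tail of $\vecx_i$: the clipped score turns the curvature into the \emph{linearly}-growing $\min\{a^2,|a|\}$, hence into a summand with a \emph{subexponential} (not $\psi_{1/2}$) envelope, so only a linear $\log(1/\delta)$ enters Step 3 — and, after a union bound over a net of the sparse ball in the proof of Theorem \ref{t:main}, only $s\log(d/s)$, avoiding the quadratic dependence on $\log(d/s)$ required in \cite{SivBenRav2015Beyond,GenKip2022Generic}. The other point requiring care is Step 2, where one must verify that the truncation in $\min\{\cdot,\cdot\}$ is essentially harmless and that $\lambda_o\sqrt n\ge576\sigma L^2$ is large enough to recover the constant $\tfrac12$.
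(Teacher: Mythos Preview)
Your overall strategy --- pointwise lower bound on the score increment, curvature from the expectation, fluctuation via symmetrization and contraction reduced to a linear process --- matches the paper's. But Step~2 contains a genuine gap. You conclude
\[
\frac{\lambda_o}{\sqrt n}\sum_{i=1}^n\mbb{E}g_i(\vecv)\ \ge\ \frac{\lambda_o\sqrt n}{4}\,c(L)\,\|\vecv\|_2^2\ \ge\ \tfrac12\|\vecv\|_2^2
\]
by ``using $\lambda_o\sqrt n\ge576\sigma L^2$ once more.'' This does not follow: nothing in Theorem~\ref{t:main} bounds $\sigma$ away from zero, so $\lambda_o\sqrt n$ can be arbitrarily small and the last inequality fails. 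With your unnormalized choice $a_i=\vecx_i^\top\vecv$, the left side of \eqref{ine:sc} as written genuinely scales like $\lambda_o\sqrt n\,\|\vecv\|_2^2$, not $\|\vecv\|_2^2$, so the gap is not repairable within your setup. The root cause is a typo in the displayed statement: inside $h$ the argument should be $\xi_i/(\lambda_o\sqrt n)-\vecx_i^\top\vecv/(\lambda_o\sqrt n)$ (this is what the paper's own proof works with, and what is actually invoked in the proof of Theorem~\ref{t:main}). With the correct $a_i=\vecx_i^\top\vecv/(\lambda_o\sqrt n)$, the prefactors recombine so that the expectation term is $\|\vecv\|_2^2/\lambda_o^2$ before the outer $\lambda_o^2$, yielding the curvature $\|\vecv\|_2^2/2$ with no lower bound on $\lambda_o\sqrt n$ required; the paper controls the truncation losses by H\"older/Markov together with $\mbb{E}(\vecx_i^\top\vecv)^4\le CL^4\|\vecv\|_2^4$ and the smallness condition $320L^4(c_1+c_2+c_s)(r_{d,s}+r_\delta)\le1$.

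A secondary issue concerns Step~3. Your surrogate $g_i(\vecv)=\tfrac12\min\{(\vecx_i^\top\vecv)^2,|\vecx_i^\top\vecv|\}\,\mathbf{1}\{|\xi_i|\le\lambda_o\sqrt n/2\}$ is unbounded (it grows linearly), so Massart/Talagrand concentration for the \emph{deviation} of the supremum is not directly available; the phrase ``a subexponential-envelope concentration inequality'' is left unspecified. The paper sidesteps this precisely by using the normalized argument, under which the natural surrogate $\varphi(\vecx_i^\top\vecv/(\lambda_o\sqrt n))$ is \emph{bounded} by $1/4$; Massart's inequality (Theorem~3 of \cite{Mas2000Constants}) then gives $\Delta\le 2\mbb{E}\Delta+\sqrt{8\log(1/\delta)}\,L\|\vecv\|_2/\lambda_o+18\log(1/\delta)$, and the bounded-range term $18\log(1/\delta)$ is absorbed via $\lambda_o^2\log(1/\delta)\le\lambda_o\sqrt n\,r_2\,r_\delta$. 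Your coordinate-splitting for the expectation of the linear process is a plausible substitute for the paper's generic chaining (Corollary~5.2 of \cite{Dir2015Tail} combined with Lemma~\ref{l:gc}), but it is not carried out in enough detail to verify that it delivers the $r_{d,s}r_1/\sqrt s$ rate rather than a cruder $\sqrt{\log d}$ dependence.
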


\section*{Acknowledgement}
We would like to thank Hironori Fujisawa of the Institute of Statistical Mathematics for helpful discussions.

\appendix
\section{Proofs of the main propositions}
\label{sec:p1}
In this section, we prove Propositions \ref{p:main:upper} and \ref{p:main:lower}.
\subsection{Preparation}
\label{sec:p1-1}
In Section \ref{sec:p1-1}, we introduce Lemmas \ref{l:servtribial} and \ref{l:gc} used in the proofs of Propositions \ref{p:main:upper} and \ref{p:main:lower}. 
\begin{lemma}
	\label{l:servtribial}
	For an $L$-subexponential random vector $\vecx \in \mbb{R}^d$ with $\mbb{E} \vecx = 0$, for  any fixed vector $\vecv \in \mbb{R}^d$ and $p\geq 4$, we have
	\begin{align}
		\left(\mbb{E} |\langle \vecx^\top \vecv \rangle|^p\right)^\frac{1}{p}\leq 2Lp\left(\mbb{E} |\langle \vecx^\top \vecv \rangle|^2\right)^\frac{1}{2}.
	\end{align}
\end{lemma}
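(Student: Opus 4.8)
The plan is to translate the $\psi_1$-norm bound that is built into the definition of an $L$-subexponential vector into a bound on the $L^p$ norm of the scalar marginal $\langle \vecx, \vecv\rangle$, using the standard equivalence between the Orlicz $\psi_1$ norm and moment growth (cf.\ Proposition 2.7.1 of \cite{Ver2018High}).

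First I would fix $\vecv \in \mbb{R}^d$ and set $Z := \langle \vecx, \vecv\rangle$. If $\mbb{E}Z^2 = 0$ then $Z = 0$ almost surely and there is nothing to prove, so assume $\mbb{E}Z^2 > 0$. Since $\mbb{E}\vecx = 0$, Definition \ref{d:sgrvec} gives $\|Z\|_{\psi_1} \le L(\mbb{E}Z^2)^{1/2}$, which by the definition of the $\psi_1$ norm means that $\mbb{E}\exp(|Z|/\eta) \le 2$ for every $\eta > L(\mbb{E}Z^2)^{1/2}$. It therefore suffices to prove $(\mbb{E}|Z|^p)^{1/p} \le 2p\,\eta$ for each such $\eta$ and then let $\eta \downarrow L(\mbb{E}Z^2)^{1/2}$.

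For the moment bound I would use the elementary pointwise inequality $t^p \le p^p e^{-p}\,e^{t}$ valid for all $t \ge 0$ and $p \ge 1$, which follows from maximizing $t \mapsto t^p e^{-t}$ on $[0,\infty)$ at $t = p$. Applying it with $t = |Z|/\eta$ and taking expectations yields $\mbb{E}|Z|^p \le \eta^p p^p e^{-p}\,\mbb{E}\exp(|Z|/\eta) \le 2\,\eta^p p^p e^{-p}$, hence $(\mbb{E}|Z|^p)^{1/p} \le 2^{1/p} e^{-1} p\,\eta \le 2p\,\eta$, using $2^{1/p} \le 2$ and $e^{-1} < 1$ for $p \ge 1$ (so the hypothesis $p \ge 4$ is not actually needed for this lemma). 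Passing to the limit $\eta \downarrow L(\mbb{E}Z^2)^{1/2}$ gives $(\mbb{E}|\langle \vecx, \vecv\rangle|^p)^{1/p} \le 2Lp(\mbb{E}|\langle \vecx, \vecv\rangle|^2)^{1/2}$, which is the claim.

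I do not expect a genuine obstacle here, as this is a textbook ``$\psi_1 \Rightarrow$ moment growth'' estimate specialized to a one-dimensional marginal; the only point that deserves a little care is that $\|\cdot\|_{\psi_1}$ is defined through an infimum, which is why I work with $\eta$ strictly above the $\psi_1$ norm and take a limit at the end instead of plugging $\eta = \|Z\|_{\psi_1}$ directly (a monotone-convergence argument would also justify evaluating at the endpoint).
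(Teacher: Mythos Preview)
Your proof is correct. Both you and the paper convert the $\psi_1$ bound on $Z = \langle \vecx, \vecv\rangle$ into an $L^p$ bound, but by different standard mechanisms. The paper first passes through a tail estimate---Markov on $e^{|Z|/c_L}$ gives $\mbb{P}(|Z| \geq c_L t) \leq 2e^{-t}$---and then integrates the tail, $\mbb{E}|Z|^p = \int_0^\infty \mbb{P}(|Z|^p \geq u)\,du$, bounding the resulting Gamma integral via $\Gamma(p) \leq p^p$; the final clean-up uses $p^{1/p} \leq e^{1/e}$, and it is in this last arithmetic step that the hypothesis $p \geq 4$ is actually used. Your route replaces all of this by the single pointwise inequality $t^p \leq (p/e)^p e^t$ followed by a direct expectation, which is shorter, yields a slightly sharper constant ($2^{1/p}e^{-1}$ in place of $(2p)^{1/p}$), and, as you observe, works for all $p \geq 1$. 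Your care with the infimum in the $\psi_1$ definition---working with $\eta > \|Z\|_{\psi_1}$ and passing to the limit---is also more scrupulous than the paper, which plugs in $c_L$ directly; that shortcut is in fact justified by monotone convergence, as you note at the end.
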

\begin{proof}
	We define $c_L = L\left(\mbb{E} |\langle \vecx^\top \vecv \rangle|^2\right)^\frac{1}{2} $.
	This proof is based on that  of Proposition 2.5.2 of \cite{Ver2018High}.
	From the definition of the $L$-subexponential random vector, we have $\mbb{E}\exp \left(|\vecx^\top \vecv|/c_L\right)\leq 2$.
	From this and Markov's inequality, for all $t\geq 0$ we have
	\begin{align}
		\label{ine:fromdefL2}
		\mbb{P}\left(|\vecx^\top \vecv|/c_L\geq t\right)&=\mbb{P}\left(e^{|\vecx^\top \vecv|/c_L}\geq e^{t}\right)\leq e^{-t} \mbb{E}e^{|\vecx^\top \vecv|/c_L} \leq 2e^{-t}.
	\end{align}
	Then, we have
	\begin{align}
		\mbb{E}|\vecx^\top \vecv|^p &= \int^\infty_0 \mbb{P}\left(|\vecx^\top \vecv|^p\geq u\right)du \stackrel{(a)}{\leq }c_L^p\int^\infty_02e^{-t}pt^{p-1}dt\stackrel{(b)}{\leq }2 p(c_Lp)^p,
	\end{align}
	where (a) follows from \eqref{ine:fromdefL2}, and (b) follows from $\Gamma(x)\leq x^x$, where $\Gamma(x)$ is the Gamma functional and Stirling's formula.
	From $p^{1/p} \leq e^{1/e}$, the proof is complete.
\end{proof}
Before introducing Lemma \ref{l:gc},  we introduce $\gamma_\alpha$-functional.
\begin{definition}[$\gamma_\alpha$-functional \cite{Dir2015Tail}]
	\label{d:tg}
	Let $(T, d)$ be a semi-metric space; that is, $d(x, z) \leq d(x, y) +
	d(y, z)$ and $d(x,y) = d(y,x)$ for $x, y, z \in T$.
	A sequence $\mc{T} = \{T_n\}_{n\geq 0}$ of subsets $T$ is called admissible if $|T_0|=1$ and $|T_n|\leq 2^{2^n}$ for all $n\geq 1$.
	For any $\alpha \in (0,\infty)$, the $\gamma_\alpha$-functional of $(T,d)$ is defined as
	\begin{align}
		\gamma_\alpha (T,d) = \inf_{\mc{T}} \sup_{t\in T}\sum_{n=0}^\infty2^{\frac{n}{\alpha}} \inf_{s\in T_n} d(t,s),
	\end{align}
	where the infimum is taken over all admissible sequences.
\end{definition}
\begin{lemma}
	\label{l:gc}
 Suppose that the assumptions on $r_1$ and $r_2$ in Theorem \ref{t:main} hold, then we have
	\begin{align}
		\gamma_1\left((r_1/r_2)^2\mbb{B}^d_0 \cap r_2 \mbb{B}^d_2,\|\cdot\|_2\right)\leq C r_1 \sqrt{s}\log (d/s),  \quad
		\gamma_2\left((r_1/r_2)^2\mbb{B}^d_0 \cap r_2 \mbb{B}^d_2,\|\cdot\|_2\right) \leq C r_1  \sqrt{\log (d/s)}.
	\end{align}
\end{lemma}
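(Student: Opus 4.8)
\textbf{Proof proposal for Lemma \ref{l:gc}.}

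The plan is to bound the two $\gamma_\alpha$-functionals of the set $K = (r_1/r_2)^2\mbb{B}^d_0 \cap r_2 \mbb{B}^d_2$ by reducing to the standard fact that, for the sparsity level $s' := (r_1/r_2)^2$, the set of $s'$-sparse unit vectors is covered by $\binom{d}{s'}$ unit Euclidean balls of dimension $s'$. Since by hypothesis $r_1 = 3\sqrt{s}\,r_2$, we have $s' = (r_1/r_2)^2 = 9s$, so $K = 9s\,\mbb{B}^d_0 \cap r_2 \mbb{B}^d_2$ is a union over supports $S$ with $|S| = 9s$ of scaled $9s$-dimensional balls $r_2 \mbb{B}^S_2$. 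The first step is to replace the $\gamma_\alpha$-functional by the entropy-integral (Dudley-type) upper bound: by the majorizing-measure / generic-chaining comparison (e.g. Talagrand), $\gamma_\alpha(K,\|\cdot\|_2) \lesssim \int_0^{\mathrm{diam}(K)} \big(\log N(K,\|\cdot\|_2,\varepsilon)\big)^{1/\alpha}\,d\varepsilon$, where $N(\cdot)$ is the covering number. The diameter of $K$ is at most $2r_2$.

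The second step is to estimate $\log N(K,\|\cdot\|_2,\varepsilon)$. Using the union-over-supports structure, $N(K,\|\cdot\|_2,\varepsilon) \leq \binom{d}{9s} \cdot N(r_2\mbb{B}^{9s}_2, \|\cdot\|_2, \varepsilon)$, and by the standard volumetric bound $N(r_2\mbb{B}^{9s}_2,\|\cdot\|_2,\varepsilon) \leq (3r_2/\varepsilon)^{9s}$ for $\varepsilon \leq r_2$. Together with $\log\binom{d}{9s} \leq 9s\log(ed/(9s)) \leq C s \log(d/s)$ (here the hypothesis $d/s \geq 3$ keeps $\log(d/s)$ bounded below and the comparison clean), we get $\log N(K,\|\cdot\|_2,\varepsilon) \leq Cs\log(d/s) + 9s\log(3r_2/\varepsilon)$ for $0 < \varepsilon \leq r_2$ and $=0$ for $\varepsilon > r_2$ is not quite right — more carefully one should split the integral at $\varepsilon = r_2$ where only the support-count term survives up to $2r_2$; I would just bound the integrand on $(r_2, 2r_2]$ by $\big(Cs\log(d/s)\big)^{1/\alpha}$ as well, contributing an extra $r_2\big(Cs\log(d/s)\big)^{1/\alpha}$, which is of the same order.

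The third step is to carry out the integral for each $\alpha$. For $\alpha = 2$: $\gamma_2(K,\|\cdot\|_2) \lesssim \int_0^{2r_2}\big(s\log(d/s) + s\log(3r_2/\varepsilon)\big)^{1/2}\,d\varepsilon \lesssim \sqrt{s}\big(\sqrt{\log(d/s)}\cdot r_2 + r_2\big) \lesssim \sqrt{s}\,r_2\sqrt{\log(d/s)}$, using that $\int_0^{2r_2}\sqrt{\log(3r_2/\varepsilon)}\,d\varepsilon \lesssim r_2$ and that $\log(d/s) \geq \log 3 > 0$ absorbs the lower-order $r_2$ term. Substituting $r_2 = r_1/(3\sqrt{s})$ gives $\gamma_2(K,\|\cdot\|_2) \lesssim r_1\sqrt{\log(d/s)}$, as claimed. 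For $\alpha = 1$: $\gamma_1(K,\|\cdot\|_2) \lesssim \int_0^{2r_2}\big(s\log(d/s) + s\log(3r_2/\varepsilon)\big)\,d\varepsilon \lesssim s\big(\log(d/s)\cdot r_2 + r_2\big) \lesssim s\,r_2\log(d/s)$; substituting $r_2 = r_1/(3\sqrt{s})$ yields $\gamma_1(K,\|\cdot\|_2) \lesssim r_1\sqrt{s}\log(d/s)$, again matching the claim.

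The main obstacle is getting the low-$\varepsilon$ end of the entropy integral and the change between the $\gamma_\alpha$-functional and the entropy integral exactly right, rather than the algebra: one must invoke the correct direction of the majorizing-measure comparison (the entropy integral is an \emph{upper} bound for $\gamma_\alpha$ up to a constant depending only on $\alpha$), and one must be slightly careful that the two regimes — the combinatorial term $s\log(d/s)$ dominating near $\varepsilon \sim r_2$ and the volumetric term $s\log(r_2/\varepsilon)$ dominating near $\varepsilon \to 0$ — are handled so that their contributions are both $O(r_2\log(d/s))$ (resp. $O(r_2\sqrt{\log(d/s)})$); this is where the assumption $d/s \geq 3$ is used, to ensure $\log(d/s)$ is bounded away from $0$ so the "$+r_2$" correction from the volumetric integral is absorbed. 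Everything else is a routine covering-number computation.
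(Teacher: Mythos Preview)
Your proposal is correct and follows essentially the same route as the paper's proof: both invoke the Dudley-type entropy-integral upper bound for $\gamma_\alpha$, use the covering-number estimate $\log N\!\left((r_1/r_2)^2\mbb{B}^d_0 \cap r_2\mbb{B}^d_2,\epsilon\right)\lesssim s\log\!\big(\tfrac{r_2 d}{s\epsilon}\big)$ (the paper cites Lemma~3.3 of \cite{PlaVer2013One} for this, while you derive it from the union-over-supports plus volumetric argument), and then integrate. The only cosmetic differences are that the paper integrates over $[0,r_2]$ rather than $[0,2r_2]$ and keeps the combinatorial and volumetric pieces combined as $9s\log(r_2 d/(s\epsilon))$ instead of splitting them; your handling of the split and of the lower-order $r_2$ term via $d/s\ge 3$ is fine.
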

\begin{proof}
	From a standard entropy bound from chaining theory, for $\alpha = 1,2$,  we have
	\begin{align}
		\label{ine:dud}
		\gamma_\alpha\left((r_1/r_2)^2\mbb{B}^d_0 \cap r_2 \mbb{B}^d_2,\|\cdot\|_2\right) \leq C\int_0^{r_2} \left(\log N\left((r_1/r_2)^2\mbb{B}^d_0 \cap r_2 \mbb{B}^d_2,\epsilon\right)\right)^\frac{1}{\alpha}d\epsilon,
	\end{align}
	where $ N\left((r_1/r_2)^2\mbb{B}^d_0 \cap r_2 \mbb{B}^d_2,\epsilon\right)$ is the covering number of $(r_1/r_2)^2\mbb{B}^d_0 \cap r_2 \mbb{B}^d_2$, which is the minimal cardinality of an $\epsilon$-net of $(r_1/r_2)^2\mbb{B}^d_0 \cap r_2 \mbb{B}^d_2$.
	
	For  $\gamma_1(\cdot)$, from  $r_1/r_2 = 3\sqrt{s}$, $d/s\geq 3$ and Lemma 3.3 of \cite{PlaVer2013One}, we have
	\begin{align}
		\gamma_1 \left((r_1/r_2)^2\mbb{B}^d_0 \cap r_2 \mbb{B}^d_2,\|\cdot\|_2\right) \lesssim \int_0^{r_2} \log N\left((r_1/r_2)^2\mbb{B}^d_0 \cap r_2 \mbb{B}^d_2 ,\epsilon\right)d\epsilon
				  \lesssim \int_0^{r_2}9s \log \frac{r_2d}{s\epsilon} d \epsilon \lesssim r_1\sqrt{s}\log (d/s).
	\end{align}
	
	For  $\gamma_2(\cdot)$, we can repeat almost the same argument as in the case $\gamma_1(\cdot)$, so we will omit it.

\end{proof}
\subsection{Proof of Proposition \ref{p:main:upper}}
For  set $A$, we define $\mr{conv}(A)$ as a  convex hull.
First, from Lemma 3.1 of \cite{PlaVer2013One}, $r_1\mbb{B}^d_1 \cap r_2 \mbb{B}^d_2 \subset 2\mr{conv}\left((r_1/r_2)^2\mbb{B}^d_0 \cap r_2 \mbb{B}^d_2 \right)$ and Lemma D.8 of \cite{Oym2018Learning}, we have
\begin{align}
	\label{ine:p11}
	\sup_{\vecv \in r_1\mbb{B}^d_1 \cap r_2 \mbb{B}^d_2}\left|\frac{1}{n}\sum_{i=1}^nh\left(\frac{\xi_i}{\lambda_o\sqrt{n}}\right) \langle \vecx_i , \vecv\rangle \right|\leq 2\sup_{\vecv \in \frac{r_1^2}{r_2^2}\mbb{B}^d_0 \cap r_2 \mbb{B}^d_2 }\left|\frac{1}{n}\sum_{i=1}^nh\left(\frac{\xi_i}{\lambda_o\sqrt{n}}\right) \langle \vecx_i , \vecv\rangle \right|.
\end{align}

We confirm the assumptions of Corollary 5.2 of \cite{Dir2015Tail}.
	First, from Lemma \ref{l:servtribial}, we have $
		\left(\mbb{E} \left|\langle \vecx^\top \vecv \rangle\right|^p\right)^\frac{1}{p}\leq 2Lp\left(\mbb{E} \langle \vecx^\top \vecv \rangle^2\right)^\frac{1}{2} = 2Lp r_2$, and from Stirling's formula $p^p \le p! e^p$, we have $
		\mbb{E}|\langle \vecv^\top \vecx_i\rangle|^p \leq (2pL r_2)^p\leq (2eL r_2)^p p!$. Then from $-1\leq h(\cdot) \leq 1$, we have
	\begin{align}
		\label{ine:bernstein2}
	\mbb{E}\left|h\left(\frac{\xi_i}{\lambda_o\sqrt{n}}\right) \left\langle \vecx_i , \vecv \right\rangle\right|^p\leq\mbb{E}\left|\left\langle \vecx_i , \vecv \right\rangle\right|^p\leq  \frac{p!}{2} \left(4eL \sigma r_2  \right)^2(2eL r_2)^{p-2} .
	\end{align}

	Second, for any $\vecv,\vecv'\in  \frac{r_1^2}{r_2^2}\mbb{B}^d_0 \cap r_2 \mbb{B}^d_2$ and for any $i \in 1,\cdots, n$, from $-1\leq h(\cdot)\leq 1$, we have $
		\left|h\left(\frac{\xi_i}{\lambda_o\sqrt{n}}\right) \langle \vecx_i, \vecv-\vecv'\rangle\right|\leq \left|\langle \vecx_i, \vecv-\vecv'\rangle\right|$,
	and from the definition of the $L$-subexponential random vector, 
	\begin{align}
		\label{ine:orlicz}
		\left\|h\left(\frac{\xi_i}{\lambda_o\sqrt{n}}\right) \langle \vecx_i, \vecv-\vecv'\rangle\right\|_{\psi_1} \leq \left\| \langle \vecx_i, \vecv-\vecv'\rangle\right\|_{\psi_1} \leq L\|\vecv-\vecv'\|_{2}.
	\end{align}

	From \eqref{ine:p11}, \eqref{ine:bernstein2}, \eqref{ine:orlicz}, $r_\delta,r_2 \leq 1$ and  Corollary 5.2 of \cite{Dir2015Tail} 	with probability at least $1-\delta$, we have
	\begin{align}
		\sup_{\vecv \in  r_1\mbb{B}^d_1 \cap r_2 \mbb{B}^d_2 }\left|\frac{1}{n}\sum_{i=1}^nh\left(\frac{\xi_i}{\lambda_o\sqrt{n}}\right)\langle \vecx_i , \vecv\rangle \right|
		&\leq CL\left(\frac{\gamma_1\left((r_1/r_2)^2\mbb{B}^d_0 \cap r_2 \mbb{B}^d_2,\|\cdot\|_2\right)}{n}+\frac{\gamma_2\left((r_1/r_2)^2\mbb{B}^d_0 \cap r_2 \mbb{B}^d_2,\|\cdot\|_2\right)}{\sqrt{n}}+r_\delta r_2\right).
	\end{align}
	From  Lemma \ref{l:gc} and $r_{d,s}\leq 1$, with probability at least $1-\delta$, we have
	\begin{align}
		\label{ine:chaining}
		\sup_{\vecv \in  r_1\mbb{B}^d_1 \cap r_2 \mbb{B}^d_2}\left|\frac{1}{n}\sum_{i=1}^nh\left(\frac{\xi_i}{\lambda_o\sqrt{n}}\right)\langle \vecx_i , \vecv\rangle \right|&\leq CL\left(\frac{r_{d,s}}{\sqrt{s}}r_1 +r_\delta r_2\right).
	\end{align}

\subsection{Proof of Proposition \ref{p:main:lower}}
Before proceeding to the proof of  Proposition \ref{p:main:lower},
we note that  similar proofs can be seen in \cite{SunZhoFan2020Adaptive} and \cite{CheZho2020Robust}.
	From the convexity of the Huber loss, for any $a,b \in \mbb{R}$, we have $
	H(a)-H(b)\geq h(b)(a-b)$ and $H(b)-H(a) \geq h(a)(b-a)$,
	and we have
	\begin{align}
	\label{ine:positiveHuber}
	0\leq (h(a)-h(b))(a-b).
	\end{align}
	For an event $\mathfrak{E}$, let $\mr{I}_{\mathfrak{E}}$ denote the indicator function of the event $\mathfrak{E}$.
	From \eqref{ine:positiveHuber},
	we have
	\begin{align}
		\lambda_o^2\sum_{i=1}^n \left(-h\left(\frac{\xi_i}{\lambda_o\sqrt{n}}-\frac{\vecx_i^\top \vecv}{\lambda_o\sqrt{n}}\right)+h \left(\frac{\xi_i}{\lambda_o\sqrt{n}}\right) \right)\frac{\vecx_i^\top \vecv}{\lambda_o\sqrt{n}} \geq\lambda_o^2 \sum_{i=1}^n \left(-h\left(\frac{\xi_i}{\lambda_o\sqrt{n}}-\frac{\vecx_i^\top \vecv}{\lambda_o\sqrt{n}}\right)+h \left(\frac{\xi_i}{\lambda_o\sqrt{n}}\right) \right)\frac{\vecx_i^\top \vecv}{\lambda_o\sqrt{n}}\mr{I}_{E_i},
	\end{align}
	where $E_i$ denotes an  event $
		E_i := \left( \left|\frac{\xi_i}{\lambda_o\sqrt{n}}\right| \leq 1/2\right) \cap \left( \left|\frac{\vecx_i^\top \vecv}{\lambda_o\sqrt{n}}\right| \leq 1/2\right)$.
	Define the functions
	\begin{align}
	\label{def:phipsi}
		\varphi(x) =\begin{cases}
		x^2 & \mbox{ if } |x| \leq 1/4\\
		(x-1/4)^2 & \mbox{ if } 1/4\leq x \leq 1/2 \\
		(x+1/4)^2 & \mbox{ if } -1/2\leq x \leq -1/4 \\
		0 & \mbox{ if } |x| >1/2
	\end{cases} ~\mbox{ and }~
		\psi(x) = \mr{I}_{|x| \leq 1/2  }.
	\end{align}
	Let $f_i(\vecv) = \varphi\left(\frac{\vecx_i^\top \vecv}{\lambda_o\sqrt{n}}\right) \psi\left(\frac{\xi_i}{\lambda_o\sqrt{n}}\right)$
	and we have
	\begin{align}
	\label{ine:huv-conv-f}
		\sum_{i=1}^n \left(-h\left(\frac{\xi_i}{\lambda_o\sqrt{n}}-\frac{\vecx_i^\top \vecv}{\lambda_o\sqrt{n}}\right)+h \left(\frac{\xi_i}{\lambda_o\sqrt{n}}\right) \right)\frac{\vecx_i^\top \vecv}{\lambda_o\sqrt{n}}\mr{I}_{E_i}\stackrel{(a)}{\geq} \sum_{i=1}^n \varphi\left(\frac{\vecx_i^\top \vecv}{\lambda_o\sqrt{n}}\right) \psi\left(\frac{\xi_i}{\lambda_o\sqrt{n}}\right)=\sum_{i=1}^n f_i(\vecv),
	\end{align}
	where (a) follows from $\varphi(v) \leq v^2$ for $|v| \leq 1/2$. We note that
	\begin{align}
	\label{ine:f-1/4}
		f_i(\vecv) \leq\varphi\left(\frac{\vecx_i^\top \vecv}{\lambda_o\sqrt{n}}\right) \leq \min\left\{\left(\frac{\vecx_i^\top \vecv}{\lambda_o\sqrt{n}}\right)^2,\frac{1}{4}\right\}.
	\end{align}
	To bound $\sum_{i=1}^n f_i(\vecv)$ from below, we have
	\begin{align}
	\label{ine:fbelow}
	\inf_{\vecv \in r_1\mbb{B}^d_1 \cap r_2 \mbb{B}^d_2}\sum_{i=1}^n f_i(\vecv)&\geq \inf_{\vecv \in r_1\mbb{B}^d_1 \cap r_2 \mbb{B}^d_2}\mbb{E}\sum_{i=1}^nf(\vecv) -\sup_{\vecv \in r_1\mbb{B}^d_1 \cap r_2 \mbb{B}^d_2} \Big|\sum_{i=1}^n f_i(\vecv)-\mbb{E}\sum_{i=1}^n f_i(\vecv)\Big|.
	\end{align}
	Define the supremum of a random process $\Delta $ indexed by $r_1\mbb{B}^d_1 \cap r_2 \mbb{B}^d_2$ as $\sup_{ \vecv \in r_1\mbb{B}^d_1 \cap r_2 \mbb{B}^d_2} \left| \sum_{i=1}^n f_i(\vecv) - \mbb{E}\sum_{i=1}^n f_i	(\vecv) \right|$. 
	From  \eqref{def:phipsi} and  \eqref{ine:huv-conv-f}, we have
	\begin{align}
	\label{ine:aplower:tmp}
	\mbb{E}\sum_{i=1}^n f_i(\vecv)
	& \geq \frac{\|\vecv\|_2^2}{\lambda_o^2} - \sum_{i=1}^n\mbb{E}\left|\frac{\vecx_i^\top \vecv}{\lambda_o\sqrt{n}}\right|^2 \mr{I}_{\left|\frac{\vecx_i^\top \vecv}{\lambda_o\sqrt{n}}\right| \geq 1/2 }- \sum_{i=1}^n\mbb{E}\left|\frac{\vecx_i^\top \vecv}{\lambda_o\sqrt{n}}\right|^2 \mr{I}_{\left|\frac{\xi_i}{\lambda_o\sqrt{n}}\right|\geq 1/2 }.
	\end{align}
	We note that, from Lemma \ref{l:servtribial} and from the assumption on $\lambda_o$ and $320L^4 (c_1+c_2+c_2)(r_{d,s}+r_\delta) \leq 1$, $ 
		r_2 = 5 L^2 \lambda_o \sqrt{n}(c_1+c_2+c_2)(r_{d,s}+r_\delta)$, 
	\begin{align}
		\label{ine:v3}
			\mbb{E}(\vecx_i^\top \vecv)^4\leq 16 \times 81\times  L^4 \times  \|\vecv\|_2^4,\quad
		\frac{64 \times 16}{\lambda_o^4n}L^4\|\vecv\|_2^4\leq \frac{ \|\vecv\|_2^2}{4\lambda_o^2}.
	\end{align}
	We evaluate the right-hand side of \eqref{ine:aplower:tmp} for each term.
	First, for any $\vecv \in r_1\mbb{B}^d_1 \cap r_2 \mbb{B}^d_2$, we have
	\begin{align}
	\label{ap:ine:cov1}
		\sum_{i=1}^n\mbb{E}\left| \frac{\vecx_i^\top \vecv}{\lambda_o\sqrt{n}}\right|^2\mr{I}_{\left|\frac{\vecx_i^\top \vecv}{\lambda_o\sqrt{n}}\right| \geq 1/2 }
		&\stackrel{(a)}{\leq} 	\sum_{i=1}^n\sqrt{\mbb{E} \left|\frac{\vecx_i^\top \vecv}{\lambda_o\sqrt{n}} \right|^4 } \sqrt{\mbb{E}\mr{I}_{\left|\frac{\vecx_i^\top \vecv}{\lambda_o\sqrt{n}}\right| \geq 1/2 }}\stackrel{(b)}{\leq } 4	\sum_{i=1}^n\mbb{E} \left|\frac{\vecx_i^\top \vecv}{\lambda_o\sqrt{n}} \right|^4\stackrel{(c)}{\leq}	\frac{ \|\vecv\|_2^2}{4\lambda_o^2} ,
	\end{align}
		where (a) follows from H{\"o}lder's inequality, (b) follows from the relation between indicator function and expectation and  Markov's inequality, and (c) follows from \eqref{ine:v3}.
		Second, for any $\vecv \in  r_1\mbb{B}^d_1 \cap r_2 \mbb{B}^d_2$, we have
	\begin{align}
	\label{ap:ine:cov2}
		\sum_{i=1}^n\mbb{E} \left|\frac{\vecx_i^\top \vecv}{\lambda_o\sqrt{n}}\right|^2 \mr{I}_{\left|\frac{\xi_i}{\lambda_o\sqrt{n}}\right|\geq 1/2 } 
		&\stackrel{(a)}{\leq} \sum_{i=1}^n\sqrt{\mbb{E} \left|\frac{\vecx_i^\top \vecv}{\lambda_o\sqrt{n}} \right|^4} \sqrt{\mbb{E}\mr{I}_{\left|\frac{\xi_i}{\lambda_o\sqrt{n}}\right|\geq 1/2 }}\stackrel{(b)}{\leq }\sum_{i=1}^n\sqrt{\mbb{E} \left|\frac{\vecx_i^\top \vecv}{\lambda_o\sqrt{n}} \right|^4} \sqrt{\frac{4}{\lambda_o^2n} \mbb{E}\xi_i^2}\stackrel{(c)}{\leq }\frac{\|\vecv\|_2^2}{4\lambda_o^2} 	,
	\end{align}
	where (a) follows from H{\"o}lder's inequality, (b) follows from relation between indicator function and expectation and from Markov's inequality, (c) follows from \eqref{ine:v3} and the assumption on  $\xi_i$.
	Consequently, from \eqref{ap:ine:cov1} and \eqref{ap:ine:cov2} we have	
	\begin{align}
	\label{ap:h_bellow}
\frac{\|\vecv\|_2^2}{2}-\lambda_o^2\Delta\leq 	\inf_{\vecv \in r_1\mbb{B}^d_1 \cap r_2 \mbb{B}^d_2}\lambda_o^2\sum_{i=1}^n \left(-h\left(\frac{\xi_i}{\lambda_o\sqrt{n}}-\frac{\vecx_i^\top \vecv}{\lambda_o\sqrt{n}}\right) +h \left(\frac{\xi_i}{\lambda_o\sqrt{n}}\right) \right)\frac{\vecx_i^\top \vecv}{\lambda_o\sqrt{n}}.
	\end{align}

	Next, we evaluate the stochastic term $\Delta$. 
 Note that, rom \eqref{ine:f-1/4} and  \eqref{ine:v3},  we have
	$
	\mbb{E}(f_i(\vecv)-\mbb{E}f_i(\vecv))^2 \leq \mbb{E}f_i^2(\vecv) \leq\frac{L^2}{\lambda_o^2n} \|\vecv\|_2^2$.
	From this, \eqref{ine:f-1/4} and Theorem 3 of \cite{Mas2000Constants}, with probability at least $1-\delta$, we have
	\begin{align}
	\label{ine:delta}
		\Delta & \leq 2  \mbb{E} \Delta + \sqrt{\sup_{\vecv \in  r_1\mbb{B}^d_1 \cap r_2 \mbb{B}^d_2} \sum_{i=1}^n\mbb{E} (f_i(\vecv)-\mbb{E}f_i(\vecv))^2} \sqrt{8\log(1/\delta)} + 18 \log(1/\delta)\leq 2 \mbb{E} \Delta+\frac{L\sqrt{8\log(1/\delta)} }{\lambda_o} \|\vecv\|_2+ 18\log(1/\delta).
	\end{align}

	From the symmetrization inequality (Theorem 11.4 of \cite{BouLugMas2013concentration}), we have  
	\begin{align}
 \label{ine:r0}
		\mbb{E}\Delta &\leq 2   \,\mbb{E} \sup_{\vecv \in r_1\mbb{B}^d_1 \cap r_2 \mbb{B}^d_2} \left| \sum_{i=1}^n 
		a_i\varphi \left(\frac{\vecx_i^\top \vecv}{\lambda_o \sqrt{n}}\right) \psi\left(\frac{\xi_i}{\lambda_o\sqrt{n}}\right) \right|,
	\end{align} 
	where $\{a_i\}_{i=1}^n$ is a sequence of  i.i.d. Rademacher random variables independent of $\{\vecx_i,\xi_i\}_{i=1}^n$.
	We define the conditional expectation by $\{\vecx_i,\xi_i\}_{i=1}^n$ as $\mbb{E}[\, \,\cdot \mid \{\vecx_i,\xi_i\}_{i=1}^n ]$.
	From Theorem 11.6 of \cite{BouLugMas2013concentration}, we have 
	\begin{align}
	\mbb{E} \left[\sup_{\vecv \in r_1\mbb{B}^d_1 \cap r_2 \mbb{B}^d_2} \left|\sum_{i=1}^na_i\varphi \left(\frac{\vecx_i^\top \vecv}{\lambda_o \sqrt{n}}\right) \psi\left(\frac{\xi_i}{\lambda_o\sqrt{n}}\right)\right| \Bigg| \{\vecx_i,\xi_i\}_{i=1}^n \right]\leq \mbb{E} \left[\sup_{\vecv \in r_1\mbb{B}^d_1 \cap r_2 \mbb{B}^d_2} \left| \sum_{i=1}^n 
	a_i\frac{\vecx_i^\top \vecv}{\lambda_o \sqrt{n}} \right| \Bigg|  \{\vecx_i,\xi_i\}_{i=1}^n \right],
	\end{align} 
	and from the basic property of the expectation and from  the argument  similar to the proof of Proposition \ref{p:main:upper}, we have
	\begin{align}
		\label{ine:r}
		\mbb{E} \sup_{\vecv \in r_1\mbb{B}^d_1 \cap r_2 \mbb{B}^d_2} \left|\sum_{i=1}^na_i\varphi \left(\frac{\vecx_i^\top \vecv}{\lambda_o \sqrt{n}}\right) \psi\left(\frac{\xi_i}{\lambda_o\sqrt{n}}\right)\right| \leq \mbb{E} \sup_{\vecv \in r_1\mbb{B}^d_1 \cap r_2 \mbb{B}^d_2} \left| \sum_{i=1}^n 
		a_i\frac{\vecx_i^\top \vecv}{\lambda_o \sqrt{n}} \right|\leq \mbb{E}	\sup_{\vecv \in \frac{r_1^2}{r_2^2}\mbb{B}^d_0 \cap r_2 \mbb{B}^d_2 }\left|\frac{1}{n}\sum_{i=1}^n	a_i\frac{\vecx_i^\top \vecv}{\lambda_o \sqrt{n}} \right|.
		\end{align} 
	We will apply Corollary 5.2 of \cite{Dir2015Tail}, and we confirm the assumptions of Corollary 5.2 of \cite{Dir2015Tail}.
	From Lemma \ref{l:servtribial} and  Stirling's formula $p^p \le p! e^p$, we have
	\begin{align}
		\label{ine:lp2}
		\frac{1}{n}\sum_{i=1}^n \mbb{E} \left|a_i\frac{\vecx_i^\top \vecv}{\lambda_o \sqrt{n}} \right|^p \leq \frac{\mbb{E} \left|\vecx_i^\top \vecv\right|^p }{(\lambda_o \sqrt{n})^{p}}\leq \frac{2^pp^p L^p (\mbb{E}\langle \vecx_i,\vecv \rangle^2)^\frac{p}{2}}{(\lambda_o \sqrt{n})^{p}} \leq 2^p p^p \left(\frac{ L r_2}{\lambda_o \sqrt{n}}\right)^p \leq \frac{p!}{2} \left(\frac{ 4eL r_2}{\lambda_o \sqrt{n}}\right)^p.
	\end{align}
	For any $\vecv, \vecv' \in  \frac{r_1^2}{r_2^2}\mbb{B}^d_0 \cap r_2 \mbb{B}^d_2$ and for any $i=1,\cdots,n$, 
	 we have $
		\left|a_i\vecx_i^\top \vecv -a_i\vecx_i^\top \vecv'\right|\leq  \left| \left\langle \vecx_i, \vecv-	\vecv'\right\rangle \right|$ .
	Then,  from the definition of the $L$-subexponential random vector, we have
	\begin{align}
		\label{ine:psi12}
		\left\|a_i\vecx_i^\top \vecv -a_i\vecx_i^\top \vecv'\right\|_{\psi_1} &\leq L \left\|\vecv-  \vecv'\right\|_2.
	\end{align}
	From \eqref{ine:lp2} and \eqref{ine:psi12}, with arguments  similar to the proof of Proposition \ref{p:main:upper},  we have
\begin{align}
	\label{ine:chaining2}
	\frac{1}{n}\mbb{E} \sup_{\vecv \in r_1\mbb{B}^d_1 \cap r_2 \mbb{B}^d_2} \left|\sum_{i=1}^na_i\frac{\vecx_i^\top \vecv}{\lambda_o \sqrt{n}} \right| \leq C\frac{L}{\lambda_o \sqrt{n}}\left(\frac{r_{d,s}}{\sqrt{s}}r_1+r_\delta r_2\right).
\end{align}
From \eqref{ap:h_bellow}, \eqref{ine:r0}, \eqref{ine:chaining2} and $\lambda_o^2\log(1/\delta)\leq (\lambda_o \sqrt{n})^2 r_\delta^2 \leq \lambda_o \sqrt{n} r_2 r_\delta$, for any $\vecv \in r_1\mbb{B}^d_1 \cap r_2 \mbb{B}^d_2$,  we have
\begin{align}
	&\lambda_o^2\sum_{i=1}^n \left(-h\left(\frac{\xi_i}{\lambda_o\sqrt{n}}-\frac{\vecx_i^\top \vecv}{\lambda_o\sqrt{n}}\right) +h \left(\frac{\xi_i}{\lambda_o\sqrt{n}}\right) \right)\frac{\vecx_i^\top \vecv}{\lambda_o\sqrt{n}} \geq 
	\frac{\|\vecv\|_2^2}{2}-CL\lambda_o\sqrt{n}\left(\frac{r_{d,s}}{\sqrt{s}}r_1+r_\delta r_2\right).
	\end{align}

	\section{Proof of the main theorem}
	\label{asec1}
    In this section, we prove Theorem \ref{t:main}.
	Before proceeding to the proof of Theorem \ref{t:main}, we note that an idea very similar to that of Theorem \ref{t:main} can also be seen in the proof of Proposition 9.1 of \cite{AlqCotLec2019Estimation}.
	The proof is divided into two major steps.
	Assume the assumptions in Theorem \ref{t:main}, and we see that \eqref{ine:upper} and \eqref{ine:sc} hold with probability at least $1-2\delta$.
	In the remaining part of Section \ref{asec1}, we assume that \eqref{ine:upper} and \eqref{ine:sc} hold.
	\subsection{Proof of Theorem \ref{t:main}: Step1}
	\label{subsec:mainpropstep1}
	We define $\vecdelta$ as $\hat{\vecbeta}-\vecbeta^*$ and $\vecdelta_\eta$ as $\eta(\hat{\vecbeta}-\vecbeta^*)$.
	We derive a contradiction if $\|\vecdelta\|_1> r_1$ and $\|\vecdelta\|_2> r_2$ hold. Assume that $\|\vecdelta\|_1> r_1$ and $\|\vecdelta\|_2> r_2$. Then we can find some $\eta_1,\,\eta_2\in(0,1)$ such that $\|\vecdelta_{\eta_1}\|_1 = r_1$ and $\|\vecdelta_{\eta_2}\|_2 = r_2$ hold.
	Define $\eta_3 = \min\{\eta_1,\eta_2\}$.
	We consider the case $\eta_3 = \eta_2$ in Section \ref{subsubsec:mainpropstep1a}, and the case $\eta_3 = \eta_1$ in Section \ref{subsubsec:mainpropstep1b}.
	
	\subsubsection{Proof of Theorem \ref{t:main}: Step 1(a)}
	\label{subsubsec:mainpropstep1a}
	Assume that $\eta_3 = \eta_2$, and we observe  that $\|\vecdelta_{\eta_3}\|_2 = r_2$ and $\|\vecdelta_{\eta_3}\|_1 \leq r_1$ hold.
	In Section \ref{subsubsec:mainpropstep1a}, set $\eta = \eta_3(=\eta_1)$.
	Let 
	\begin{align}
	 Q'(\eta) = \frac{\lambda_o}{\sqrt{n}} \sum_{i=1}^n \left(-h\left( \frac{y_i-\vecx_i^\top (\vecbeta^*+\vecdelta_\eta)}{\lambda_o\sqrt{n}}\right)+h\left( \frac{\xi_i}{\lambda_o \sqrt{n}}\right) \right)\vecx_i^\top \vecdelta.
	 \end{align}
	From the proof of Lemma F.2. of \cite{FanLiuSunZha2018Lamm}, we have $\eta Q'(\eta) \leq \eta Q'(1)$, which means that
	\begin{align}
	\label{ine:det:1}
	&\sum_{i=1}^n \frac{\lambda_o}{\sqrt{n}}\left(-h\left( \frac{y_i-\vecx_i^\top (\vecbeta^*+\vecdelta_\eta)}{\lambda_o\sqrt{n}}\right)+h\left( \frac{\xi_i}{\lambda_o \sqrt{n}}\right)\right) \vecX_i^\top \vecdelta_\eta \leq \sum_{i=1}^n \frac{\lambda_o}{\sqrt{n}}\eta \left(-h  \left(\frac{\vecx_i^\top \hat{\vecbeta}}{\lambda_o\sqrt{n}}\right)+h\left( \frac{\xi_i}{\lambda_o \sqrt{n}}\right)\right)\vecX_i^\top\vecdelta.
	\end{align}
	For a vector $\vecv \in \mbb{R}$, let $\partial \vecv$ be the sub-differential of $\|\vecv\|_1$, and
	note that $\|\hat{\vecbeta}\|_1-\|\vecbeta^*\|_1 \leq \langle \partial \hat{\vecbeta},\vecdelta\rangle$, which is  the definition of the sub-differential.
	Therefore, 
	adding $\eta \lambda_s(\|\hat{\vecbeta}\|_1-\|\vecbeta^*\|_1) $ to both sides of \eqref{ine:det:1}, we have
	\begin{align}
		\label{ine:det2:1}
		&\sum_{i=1}^n \frac{\lambda_o}{\sqrt{n}}\left(-h\left( \frac{y_i-\vecx_i^\top (\vecbeta^*+\vecdelta_\eta)}{\lambda_o\sqrt{n}}\right) +h\left( \frac{\xi_i}{\lambda_o \sqrt{n}}\right)\right) \vecX_i^\top \vecdelta_\eta \leq \sum_{i=1}^n \frac{\lambda_o}{\sqrt{n}} h\left( \frac{\xi_i}{\lambda_o \sqrt{n}}\right) \vecX_i^\top \vecdelta_\eta+\eta \lambda_s(\|\vecbeta^*\|_1-\|\hat{\vecbeta}\|_1),
	\end{align}
	where we use the optimality of $\hat{\vecbeta}$.
		We note that $
			\eta \lambda_s(\|\vecbeta^*\|_1-\|\hat{\vecbeta}\|_1) \leq \lambda_s \|\vecdelta_\eta\|_1$, and from \eqref{ine:upper}, \eqref{ine:sc} and $r_1 = 3\sqrt{s}r_2$, we have $
			\|\vecdelta_\eta\|_2^2/4 \leq (c_1+c_2+c_s) \lambda_o \sqrt{n}L\left(r_{d,s}+ r_\delta\right)\|\vecdelta_2\|_2$.
		From this and $\|\vecdelta_\eta\|_2\geq 0$, we have $\|\vecdelta_\eta\|_2 \leq 4(c_1+c_2+c_s) \lambda_o \sqrt{n}L\left(r_{d,s}+ r_\delta\right)$.
		This is a contradiction because $
		\|\vecdelta_\eta\|_2 = r_2$.
	
	\subsubsection{Proof of Theorem \ref{t:main}: Step 1(b)}
	\label{subsubsec:mainpropstep1b}
	Assume that $\eta_3 = \eta_1$, then we see that $\|\vecdelta_{\eta_3}\|_2 \leq r_2$ and $\|\vecdelta_{\eta_3}\|_1 = r_1$ hold. 
	In Section \ref{subsubsec:mainpropstep1b}, set $\eta = \eta_3(=\eta_1)$.
	First assume  $\|\vecdelta_\eta \|_2 \geq \|\vecdelta_\eta\|_1/\sqrt{s}$.
	Then, we immediately observe that $\|\vecdelta_\eta \|_1 \leq \sqrt{s}\|\vecdelta_\eta\|_2$ holds, and
	from $\|\vecdelta_\eta \|_1 = r_1 = 3\sqrt{s}r_2$, which is a contradiction.
	
	Next, assume  $\|\vecdelta_\eta \|_2 \leq \|\vecdelta_\eta\|_1/\sqrt{s}$.
	From \eqref{ine:positiveHuber}, we can see that the lef-hand side  of \eqref{ine:det:1} is positive, and we have
		\begin{align}
			\label{ine:det:5}
		0\leq \frac{1}{n}\sum_{i=1}^n\lambda_o\sqrt{n}  h\left( \frac{y_i-\vecx_i^\top \vecbeta^*}{\lambda_o \sqrt{n}}\right)\vecX_i^\top\vecdelta_\eta+	\eta \lambda_s(\|\vecbeta^*\|_1-\|\hat{\vecbeta}\|_1).
		\end{align}
		From \eqref{ine:det:5}, \eqref{ine:upper}, $r_1 = 3\sqrt{s}r_2$, $\max\{r_2, r_\delta, r_{d,s}\}\leq 1$ and the assumption $\|\vecdelta_\eta \|_2 \leq \|\vecdelta_\eta\|_1/\sqrt{s}$, we have
		\begin{align}
		0 &\leq c_1 \lambda_o \sqrt{n}L\left(r_{d,s}r_1/\sqrt{s} + r_\delta r_2\right)+\eta \lambda_s(\|\vecbeta^*\|_1-\|\hat{\vecbeta}\|_1)\leq  C_s\|\vecdelta_\eta\|_1+\eta \lambda_s (\|\vecbeta^*\|_1-\|\hat{\vecbeta}\|_1),
		\end{align}
		where $C_s = c_1 \lambda_o \sqrt{n}L(r_{d,s}+  r_\delta)/\sqrt{s}$.
		Define $\mc{J}_{\veca}$ as the index set of the non-zero entries of $\veca$, and $\vecdelta_{\eta, \mc{J}_{\veca}}$ as a vector such that $(\vecdelta_{\eta, \mc{J}_{\veca}})_i=(\vecdelta_\eta)_i$ for $i \in \mc{J}_{\veca}$ and $(\vecdelta_{\eta, \mc{J}_{\veca}})_i = 0$ for $i \notin \mc{J}_{\veca}$.
		Furthermore, we see
		\begin{align}
		0
		&\leq C_s\|\vecdelta_\eta\|_1+\eta \lambda_s(\|\vecbeta^*\|_1-\|\hat{\vecbeta}\|_1) \nonumber \\
		& \leq C_s (\|\vecdelta_{\eta,\mc{J}_{\vecbeta^*}}\|_1+\|\vecdelta_{\eta,\mc{J}^c_{\vecbeta^*}}\|_1) + \eta\lambda_s(\|\vecbeta^*_{\mc{J}_{\vecbeta^*}}- \hat{\vecbeta}_{\mc{J}_{\vecbeta^*}}\|_1-\|\hat{\vecbeta}_{\mc{J}^c_{\vecbeta^*}}\|_1)=\left(\lambda_s + C_s\right)\|\vecdelta_{\eta,\mc{J}_{\vecbeta^*}}\|_1
		+\left(-\lambda_s+ C_s\right)\|\vecdelta_{\eta,\mc{J}^c_{\vecbeta^*}}\|_1.
		\end{align}
		Subsequently, from $c_s\geq 5c_1$, we have
			\begin{align}
				\label{ine:preRE}
				\| \vecdelta_{\eta,\mc{J}_{\vecbeta^*}^c}\|_1 \leq \frac{\lambda_s + C_s}{\lambda_s - C_s }\| \vecdelta_{\eta,\mc{J}_{\vecbeta^*}}\|_1\leq (3/2) \| \vecdelta_{\eta,\mc{J}_{\vecbeta^*}}\|_1,
			\end{align}
			and we have $
				\|\vecdelta_\eta\|_1 = \|\vecdelta_{\eta,\mc{J}_{\vecbeta^*}}\|_1+ \|\vecdelta_{\eta,\mc{J}^c_{\vecbeta^*}}\|_1 \leq (3/2+1) \|\vecdelta_{\eta,\mc{J}_{\vecbeta^*}}\|_1\leq (5/2)\sqrt{s} \|\vecdelta_\eta\|_2$.
			 Then, there is  a contradiction because $\|\vecdelta_\eta\|_1 =r_1 = 3\sqrt{s}r_2$.

	\subsection{Proof of Theorem \ref{t:main}: Step 2}
	\label{asubsec:2}
	From the arguments in Section \ref{subsec:mainpropstep1}, we have $\|\vecdelta\|_2\leq r_2$ or $\|\vecdelta\|_1\leq r_1$ holds.
	In Section \ref{asubsubsec:2-1}, assume that $\|\vecdelta\|_2 > r_2$ and $\|\vecdelta\|_1 \leq r_1$ 
	hold and then derive a contradiction.
	In Section \ref{asubsubsec:2-2}, assume that $\|\vecdelta\|_2 \leq r_2$ and $\|\vecdelta\|_1 > r_1$ 
	hold and then derive a contradiction.
	Finally, we have $
		\|\hat{\vecbeta}-\vecbeta^*\|_2 \leq r_2$ and $\|\hat{\vecbeta}-\vecbeta^*\|_1 \leq r_1$,
	and the proof is complete.
	\subsubsection{Proof of Theorem \ref{t:main}: Step 2(a)}
	\label{asubsubsec:2-1}
	Assume that $\|\vecdelta\|_2 > r_2$ and $\|\vecdelta\|_1 \leq r_1$ hold, and
	then we  observe $\eta_4\in (0,1)$ such that $\|\vecdelta_{\eta_4}\|_2=r_2$ holds.
	Note that $\|\vecdelta_{\eta_4}\|_1\leq r_1$ also holds.
	Then, from the same arguments of Section \ref{subsubsec:mainpropstep1a}, we have  a contradiction.
	
	\subsubsection{Proof of Theorem \ref{t:main}: Step 2(b)}
	\label{asubsubsec:2-2}
	Assume that $\|\vecdelta\|_2 \leq r_2$ and $\|\vecdelta\|_1 > r_1$ hold, and
	then we observe $\eta_5\in (0,1)$ such that $\|\vecdelta_{\eta_5}\|_1=r_1$ holds.
	We note that $\|\vecdelta_{\eta_5}\|_2\leq r_2$ also holds.
	Then, from the same arguments of Section \ref{subsubsec:mainpropstep1b}, we have  a contradiction.

\end{document}